\newcommand{\capto}{{}^C \! D_t}
\newcommand{\capt}{{}^C \! D^{\alpha}_t}
\newtheorem{theorem}{Theorem}
\newtheorem{proposition}[theorem]{Proposition}
\newtheorem{definition}[theorem]{Definition}
\begin{document}

\title[The Time-Fractional Wave Equation with Acoustic Boundary Conditions]{On the Initial Boundary Value Problem to the Time-Fractional Wave Equation with Acoustic Boundary Conditions}


\author[P.M. Carvalho-Neto]{\vspace*{-0.5cm} Paulo M. Carvalho-Neto$^1$ \vspace*{-0.5cm}}
\address[Paulo M. de Carvalho Neto]{Department of Mathematics, Federal University of Santa Catarina, Florian\'{o}polis - SC, Brazil.\vspace*{-0.5cm}}
\email[]{$^1$paulo.carvalho@ufsc.br}

\author[C.L. Frota]{C\'{\i}cero L. Frota$^2$ \vspace*{-0.5cm}}
\address[C\'{\i}cero L. Frota]{Department of Mathematics, State University of Maringá, Maring\'{a} - PR, Brazil. \vspace*{-0.5cm}}
\email[]{$^2$clfrota@uem.br}

\author[P.G.P. Torelli]{Pedro G. P. Torelli$^{3,\ast}$ \vspace*{-0.5cm}}
\address[Pedro G. P. Torelli]{Department of Mathematics, State University of Maringá, Maring\'{a} - PR, Brazil.}
\email{$^3$pg54864@uem.br}
\thanks{$^\ast$Corresponding author: pg54864@uem.br}


\subjclass[2010]{26A33, 34A08, 35L05, 35R11}


\keywords{fractional partial differential equation, Caputo derivative, fractional wave equation, acoustic boundary conditions}


\begin{abstract}
This paper is concerned with the study of the well-posedeness for the initial boundary value problem to the time-fractional wave equation with acoustic boundary conditions. The problem is considered in a bounded and connected domain $\Omega \subset {\mathbb{R}^{n}}$, $n \geq 2$, which includes simply connected regions. The boundary of $\Omega$ is made up of two disjoint pieces $\Gamma_{0}$ and $\Gamma_{1}.$ Homogeneous Dirichlet conditions are enforced on $\Gamma_0$, while acoustic boundary conditions are considered on $\Gamma_1$. To establish our main result, we employ the Faedo-Galerkin method and successfully solve a general system of time-fractional ordinary differential equations which extends the scope of the classical Picard-Lindelöf theorem.
\end{abstract}

\maketitle

\section{Introduction} \label{secao1}

Over the past few decades, there has been a growing interest in using fractional calculus in combination with differential equations as a powerful tool for analyzing complex systems. These systems include, among others, diffusion in nerve cells, anomalous diffusion processes in porous media, turbulent fluids, plasma, finance and others; see \cite{CaCa1,CaCaLy1,FeBrSlBaWe1,GoRa1,MeKl1,MuPa1} as a few examples.

In the light of that, in this paper we are particulary interested to address the classical initial boundary value problem (IBVP) for the wave equation with acoustic boundary condition, when we replace the standard time derivative with its natural non-integer generalization,  the Caputo fractional derivative. To be more precise, here we assume that $\Omega\subset\mathbb{R}^n$ (with $n\geq2$) is an open, bounded and connected set, with smooth boundary $\Gamma$ made up of two disjoint parts $\Gamma_0$ and $\Gamma_1$ ($\Gamma=\Gamma_0 \cup \Gamma_1$ and $\Gamma_0 \cap \Gamma_1= \emptyset$), both connected with positive measure and $\nu$ denotes the unit normal vector on $\Gamma_{1}$ pointing towards the exterior of $\Omega$. The main subject of this work is to prove the well posedeness (existence and uniqueness of solution, as well as its countinuous dependence on initial data) to the following IBVP:
\begin{align}
\label{1prob} &\capt u_t(x,t) - \Delta u(x,t) = 0, && (x,t) \in \Omega \times (0,T),\\
\label{2prob} &u(x,t)=0, &&  (x,t) \in \Gamma_0 \times (0,T),\\
\label{3prob} &f(x)\delta_{tt}(x,t) + g(x)\delta_t(x,t) + h(x)\delta(x,t) = -u_t(x,t), &&  (x,t) \in \Gamma_1 \times (0,T), \hspace{1cm}\\
\label{4prob} &\displaystyle \delta_t(x,t)=\frac{\partial u}{\partial \nu}(x,t) , &&  (x,t) \in \Gamma_1 \times (0,T),\\
\label{5prob} &u(x,0)=u_0(x), \quad u_t(x,0)=u_1(x), &&  x \in \Omega,\\
\label{6prob} &\displaystyle \delta(x,0)=\delta_0(x), \quad \delta_t(x,0)= \frac{\partial u_0}{\partial \nu}(x), && x \in \Gamma_1,
\end{align}
where $\capt$ denotes the classical Caputo fractional derivative of order $\alpha\in(0,1]$, $\Delta$ is the Laplacian operator, $f,g,h:\overline{\Gamma_1}\rightarrow\mathbb{R}$ are given functions and finally, $u_0,u_1:\Omega\rightarrow\mathbb{R}$ and $\delta_0:\Gamma_1\rightarrow\mathbb{R}$ are the initial conditions of the system.

In the limit case, when $\alpha=1$, and the acoustic boundary conditions \eqref{3prob} and \eqref{4prob} are imposed on the whole boundary $\Gamma$, we get the problem associated with a wave motion in a fluid
\begin{align*}
 & u_{tt}(x,t) - \Delta u(x,t) = 0, && (x,t) \in \Omega \times (0,T),\\
 &f(x)\delta_{tt}(x,t) + g(x)\delta_t(x,t) + h(x)\delta(x,t) = -u_t(x,t), &&  (x,t) \in \Gamma \times (0,T), \hspace{1cm}\\
 &\displaystyle \delta_t(x,t)=\frac{\partial u}{\partial \nu}(x,t) , &&  (x,t) \in \Gamma \times (0,T),
\end{align*}
introduced by Beale and Rosencrans (\cite{Be1} and \cite{BeRo1}), which gave rise to a big range of more general problems, see for instance, \cite{AlCaCl1,  BrSiClFr1, CaDoCaFrVi1, Fr1, FrCoLa1, FrGo1, FrMeVi1,  GrSaHo1, KoTa1, LiClFrMe1} and references therein.

Still in the context of integer order time derivatives (the classic wave equation), the first paper dealing with a non-linear problem was \cite{FrGo1}, where Frota and Goldstein considered the Carrier non-linear wave equation
\begin{equation*}
u_{tt}(x,t)-M\left(\int_\Omega u^2(x,t)\,dx\right)\Delta u(x,t)+C|u_t(x,t)|^\gamma u_t(x,t)=0,\quad (x,t) \in \Omega \times (0,T],
\end{equation*}
together with \eqref{2prob} - \eqref{6prob}; where $C$ was a nonnegative constant, $M\in C^1([0,\infty);\mathbb{R})$ and $\gamma>0$.

The physical justification for the model can be seen in \cite{Be1}, \cite{BeRo1} and \cite{MoIn1}. Here, just for a brief contextualization, we give some comments. In our context, $\Omega$ represents a region of the space filled with an ideal fluid at rest which is set into motion by sound waves propagating within the domain. Therefore, if $u$ is the potential velocity of the fluid, it satisfies the time-fractional wave equation \eqref{1prob}. The boundary $\Gamma$ is made up two parts $\Gamma_0$ and $\Gamma_1$, with $\Gamma_0$ absorbing (see \eqref{2prob}) and $\Gamma_{1}$ locally reactive, such that each point $x \in \Gamma_1$ responds independently to the pressure caused by the sound waves. This means that $\delta$, the vertical displacement in the normal direction to the boundary $\Gamma_1$ should satisfies the equation \eqref{3prob}. In fact, each point on the boundary $\Gamma_1$ acts like a damped harmonic oscillator that "springs" in response to the sound pressure. Moreover, we also admit that there exists the compatibility between the normal speed of the boundary and the normal speed of the fluid, which is expressed by equation \eqref{4prob}.

Initial value problems for the time fractional wave equation have received extensive coverage in the scientific literature. Often, the Laplace transform has been widely employed as the primary tool for obtaining solutions. For instance, classic works in the field, such as \cite{KiSrTr1} and \cite{Po1}, have extensively discussed Cauchy problems in the context of fractional equations. Additionally, notable contributions, as \cite{Ma1} and \cite{Ma2}, have provided explanations regarding the inherent diffusion-wave phenomena associated with these solutions. The technique of separating variables combined with the Laplace transform has been utilized in \cite{Da-GeJa1} in the study of IBVPs, encompassing both homogeneous and non-homogeneous boundary conditions. More recently, Faedo-Galerkin's method was utilized in \cite{HuYa1} to demonstrate the well-posedness of an IBVP for the time fractional wave equation, albeit in a slightly different sense than the Caputo formulation.

To the best of author's knowledge, this is the first paper considering the time-fractional wave equation coupled with acoustic boundary conditions. In order to facilitate the implementation of numerical methods, as well as to create basis for treating more general nonlinear problems, we apply Faedo-Galerkin's constructive method. It should be mentioned that even in this context of linear equations, when we project our problem into finite-dimensional subspaces by getting the approximated problems, we arrive at a system of time-fractional ordinary differential equations that, as far as we know, has never been treated before.

We drew inspiration from \cite{BeRo1} and \cite{FrGo1} while formulating the class of problem \eqref{1prob}-\eqref{6prob}, where we shall work in a much more general context, namely, that of time-fractional wave equations. Since our approach introduces a more complex problem by incorporating the Caputo fractional time-derivative, some new notions and results should be established. In fact, there are new key challenges when considering problem \eqref{1prob}-\eqref{6prob}, which are successfully addressed in this paper:

\begin{itemize}
\item[(i)] to consider the specificities and restrictions imposed by the Caputo fractional derivative;

\item[(ii)] to establish a more general version of Picard-Lindelöf theorem.
\end{itemize}

The remaining paper is organized as follows. In Section \ref{preliminars} we provide the prerequisites and auxiliary results that are crucial to the development of subsequent sections. Section \ref{picard} is devoted to analyzing a time-fractional ODE system crucial for establishing the initial aspects of our main result. This section also includes observations regarding the system's solution, which are detailed in Subsection \ref{digression}. In Section \ref{existenceresults}, we explore the well-posedness theory of the problem \eqref{1prob} - \eqref{6prob}, and in Section \ref{closingremarks}, we give some concluding remarks.

\section{Notations, Prerequisites and Auxiliary Results} \label{preliminars}

In this section we give the notations for the functional spaces and also introduce the theory of fractional calculus concerning the Caputo fractional derivative. First of all let us set the triple $(\Omega, \Gamma_{0}, \Gamma_{1})$. Throughout the paper $\Omega \subset {\mathbb{R}}^{n} \, (n \geq 2)$ is an open bounded and connected set with smooth boundary $\Gamma$ made up of two disjoint pieces $\Gamma_{0},\, \Gamma_{1}$ both connected with positive measure. Actually $\Gamma_{0}$ and $\Gamma_{1}$ are connected subsets of $\Gamma$ both with positive measure such that $\Gamma = \Gamma_{0} \cup \Gamma_{1}$ and  $\Gamma_{0} \cap \Gamma_{1} = \emptyset.$ We observe that the domains $\Omega$ includes simply connected regions of ${\mathbb{R}}^{n}$.

For the classical functional spaces such as Sobolev spaces and $L^{p}$ spaces  we adopt the standard notation as described in \cite{Lions, LiMa1}. We denote the inner products and norms in $L^2(\Omega)$ and $L^2(\Gamma_1)$ respectively by
\begin{equation*}
(u,v) = \int_\Omega u(x)\, v(x)\, dx, \quad \vert u \vert = \left( \int_\Omega ( u(x) )^2 dx \right)^{\frac{1}{2}}
\end{equation*}
and
\begin{equation*}
(u,v)_{\Gamma_1} = \int_{\Gamma_1} u(x)\, v(x)\, dx, \quad \vert u \vert_{\Gamma_1} = \left( \int_{\Gamma_1} ( u(x) )^2 dx \right)^{\frac{1}{2}}.
\end{equation*}
If $u, v \in H^{1}(\Omega)$, the real Sobolev space of first order, we write
$$
(\nabla u , \nabla v) = \sum_{i=1}^{n} \left( \frac{\partial u}{\partial x_{i}} , \frac{\partial v}{\partial x_{i}}\right) = \sum_{i=1}^{n} \int\limits_{\Omega} \frac{\partial u}{\partial x_{i}}(x) \, \frac{\partial v}{\partial x_{i}} (x) \, dx
$$
\noindent and
$$
\vert \nabla u \vert = \left[ (\nabla u , \nabla u) \right]^{\frac{1}{2}} = \left[ \sum_{i=1}^{n} \int\limits_{\Omega} \left(\frac{\partial u}{\partial x_{i}}(x) \right)^{2} \,  dx \right]^{\frac{1}{2}} .
$$

Let $\mathcal{H}_\Delta(\Omega)=\{u \in H^1(\Omega); \Delta u \in L^2(\Omega)\}$ be the Hilbert endowed with the inner product $(u,v)_{\mathcal{H}_\Delta(\Omega)} = (u,v)_{H^1(\Omega)} + (\Delta u , \Delta v)$.  By $\gamma_0: H^1(\Omega) \longrightarrow H^{\frac{1}{2}}(\Gamma)$ and  $\gamma_1:\mathcal{H}_\Delta(\Omega) \longrightarrow H^{-\frac{1}{2}}(\Gamma)$ we denote the trace map of order zero and the Neumann trace map on $\mathcal{H}_\Delta(\Omega)$ respectively satisfying
$$
\gamma_0(u) = u \vert_{\Gamma} \textrm{ and } \gamma_1(u) = \frac{\partial u}{\partial \nu} \bigg \vert_{\Gamma}, \quad \mbox{for all } u \in \mathcal{D}(\overline{\Omega}).
$$
It is well known that $\gamma_{0}$ and $\gamma_{1}$ are bounded linear operators and for all $u \in \mathcal{H}_\Delta(\Omega)$ and $v \in H^1(\Omega)$ the following generalized Green's formula holds
\begin{equation*}
	(\Delta u , v) + (\nabla u, \nabla v) = \left\langle \gamma_1(u), \gamma_0(v) \right\rangle_{H^{-\frac{1}{2}}(\Gamma) \times H^{\frac{1}{2}}(\Gamma)}\,.
\end{equation*}

Another closed subspace of $H^{1}(\Omega)$ that we address here is the closure of the set $\{ u \in C^{1}(\overline{\Omega}); u = 0 \textrm{ in } \Gamma_{0} \}$ in $H^{1}(\Omega)$, which we denote by $H^1_{\Gamma_0}(\Omega)$. Since $\Omega$ is a regular domain and $\Gamma_{0}$ has positive measure we have that
$$
H^1_{\Gamma_0}(\Omega) = \{ u \in H^1(\Omega); \gamma_0(u) =0 \textrm{ a.e. in } \Gamma_0\}
$$
is a reflexive and separable Hilbert space. Additionally, the Poincaré inequality holds in $H^1_{\Gamma_0}(\Omega)$ and, in view of this inequality, we have that
\begin{equation*}
	((u,v)) = (\nabla u, \nabla v) \quad \textrm{and} \quad \Vert u \Vert = \vert \nabla u \vert,
\end{equation*}
denote an inner product and a norm in $H^1_{\Gamma_0}(\Omega)$ that are equivalent to the usual ones induced by $H^{1}(\Omega)$.

We employ the notations on the Bochner-Lebesgue and Bochner-Sobolev spaces of vector-valued functions, $L^p(0,T;X)$ and $W^{m,p}(0,T;X)$ where $X$ is a Banach space,  as in \cite[Chap. 1]{ArBaHi1}, \cite{Lions} and \cite[Chap. 23]{Ze1}. For $\alpha > 0$ and $f:[0,T]\rightarrow X$, the Riemann-Liouville (RL for short) fractional integral of order $\alpha$ is
\begin{equation*}J_{t}^\alpha f(t):=\dfrac{1}{\Gamma(\alpha)}\displaystyle\int_{0}^{t}{(t-s)^{\alpha-1}f(s)}\,ds,
\end{equation*}
for every $t\in[0,T]$ such that the above integral exists. Above $\Gamma$ is used to denote the classical Euler's gamma function. Also the RL fractional derivative of order $\alpha$ and the Caputo fractional derivative of order $\alpha$ are respectively defined by
\begin{equation*}
	D_{t}^\alpha f(t) := \dfrac{d^{ \lceil \alpha \rceil}}{dt^{ \lceil \alpha \rceil}}\left[J_t^{\lceil \alpha \rceil -\alpha}f(t)\right],
\end{equation*}
\noindent and
\begin{equation*}
	\capt f(t):=D_{t}^\alpha\left[f(t)-\sum_{k=0}^{\lceil \alpha \rceil -1}\dfrac{f^{(k)}(0)}{k!}t^{k}\right],
\end{equation*}
for every $t\in[0,T]$ such that the right side exists. Above we use $\lceil \cdot \rceil $ to represent the ceiling function, i.e., if $m\in\mathbb{N}$ is such that $m-1<\alpha\leq m$, then $\lceil \alpha \rceil=m$.

In \cite{CarFe1} the authors prove that $\{J_t^\alpha:\alpha\geq0\}\subset\mathcal{L}\big(L^p(0,T;X)\big)$ is a $C_0$-semigroup on $L^p(0,T;X), 1 \leq p < \infty$, and $\{J_t^\alpha:\alpha\geq0\}\subset\mathcal{L}\big(C([0,T];X)\big)$ forms a semigroup on $C([0,T];X)$, where $J_t^0f(t)=f(t)$ for almost every $t\in[0,T]$. Concerning the existence of the Caputo fractional derivative $\capt f(t)$ for almost every $t \in [0,T]$, it is enough to consider functions $f \in C^{\lceil\alpha\rceil-1}([0,T];X)$ such that $J_t^{\lceil\alpha\rceil-\alpha}f(t) \, \in W^{\lceil\alpha\rceil,1}(0,T;X)$,  see  Bazhlekova \cite[Section 1.2]{Baz1} and Carvalho-Neto \cite[Section 2.2]{Car1}.

Below we present fundamental results concerning the previously mentioned fractional operators, which are essential for Section \ref{picard} and the estimations in Section \ref{existenceresults}. We emphasize the simplification $'=\frac{d}{dt}$ throughout the remainder of this paper to simplify notation and, more importantly, to avoid excess subscripts on Theorem \ref{maintheorem}.

\begin{proposition} \label{prop1}
Let $0 < \alpha <1$ and $X$ a Banach space.
\begin{itemize}
\item[(i)] Assume that $f \in L^1(0,T)$ is a nonnegative function. Then
\begin{equation} \label{prop07}
J_{t}^{1} f(t) \leq \Big[T^{(1-\alpha)} \Gamma(\alpha)\Big]J_{t}^{\alpha} f(t), \textrm{ for a.e. } t \in [0,T].\vspace*{0.2cm}
\end{equation}

\item[(ii)] If $f \in L^1(0,T;X)$
\begin{equation} \label{id-1}
D^\alpha_t \left[J_{t}^{{\alpha}}f(t)\right]=f(t), \textrm{ for a.e. } t \in [0,T].
\end{equation}
If additionally $J_t^{1-{\alpha}}f \in W^{1,1}(0,T;X)$, then
\begin{equation} \label{id0}J^\alpha_t \big[D^\alpha_t u(t)\big] = u(t) - \frac{t^{\alpha-1}}{\Gamma(\alpha)} \Big( J^{1-\alpha}_s u(s) |_{s=0} \Big)\end{equation}

\item[(iii)] For $f \in C([0,T];X)$,
\begin{equation} \label{id1}
\capt \left[J_{t}^{{\alpha}}f(t)\right]=f(t), \textrm{ for a.e. } t \in [0,T].
\end{equation}
If additionally $J_t^{1-{\alpha}}f \in W^{1,1}(0,T;X)$, then
\begin{equation} \label{id2}
J_{t}^{{\alpha}}\left[ \capt f(t)\right]=f(t)-f(0), \textrm{ for a.e. } t \in [0,T],
\end{equation}
and
\begin{equation} \label{id3}
J_{t}^{{1}}\left[ \capt f(t)\right]=J_t^{1-\alpha}f(t)-f(0)\left[\dfrac{t^{1-\alpha}}{\Gamma(2-\alpha)}\right], \textrm{ for a.e. } t \in [0,T].\vspace*{0.2cm}
\end{equation}

\item[(iv)] If $f \in W^{1,1}(0,T;X)$,
\begin{equation}\label{id4}
\capt f (t) = J^{1-\alpha}_t f'(t), \textrm{ for a.e. } t \in [0,T].
\end{equation}
If additionally $f(0)=0$, then we can reinterpret the equation \eqref{id4} in the form
\begin{equation} \label{id5}
\dfrac{d}{dt}\left[J^{1-\alpha}_t f(t)\right] = J^{1-\alpha}_t f'(t), \textrm{ for a.e. } t \in [0,T].\vspace*{0.2cm}
\end{equation}

\item[(v)] For $f \in C^{1}([0,T];X)$ such that $J_t^{1-\alpha}f(t) \, \in W^{2,1}(0,T;X)$, we have
\begin{equation}\label{id6}
\capto^{\alpha+1} f (t) = \capt f'(t), \textrm{ for a.e. } t \in [0,T].\vspace*{0.2cm}
\end{equation}

\item[(vi)] If $f \in W^{1,2}(0,T;X)$, then
\begin{equation}\label{id7}
\left\|\capt f (t)\right\|_X^2 \leq \left[\frac{T^{1-\alpha}}{\Gamma(2-\alpha)}\right] J^{1-\alpha}_t \left\|f'(t)\right\|_X^2, \textrm{ for a.e. } t \in [0,T].
\end{equation}
\end{itemize}
\end{proposition}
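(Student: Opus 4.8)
The unifying observation is that all six items reduce to three structural facts already recorded above: the semigroup law $J_t^{\alpha}J_t^{\beta}=J_t^{\alpha+\beta}$ on $L^1(0,T;X)$ and on $C([0,T];X)$, the representations $D_t^{\alpha}=\frac{d}{dt}\circ J_t^{1-\alpha}$ and $\capt(\cdot)=D_t^{\alpha}\!\big[\,\cdot-(\cdot)(0)\,\big]$ valid for $0<\alpha<1$, and the Lebesgue differentiation identity $\frac{d}{dt}J_t^{1}g=g$ a.e.\ for $g\in L^1$. My plan is to prove (i) and (ii) from scratch and then bootstrap (iii)--(vi) from (ii) together with one auxiliary ``Leibniz-type'' formula for the Riemann--Liouville integral. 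For (i) I argue pointwise: for $0\le s\le t\le T$ and $\alpha-1<0$ one has $(t-s)^{\alpha-1}\ge t^{\alpha-1}\ge T^{\alpha-1}$, hence, since $f\ge0$, $\Gamma(\alpha)J_t^{\alpha}f(t)=\int_0^t(t-s)^{\alpha-1}f(s)\,ds\ge T^{\alpha-1}\int_0^t f(s)\,ds=T^{\alpha-1}J_t^{1}f(t)$, which rearranges to \eqref{prop07}.

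For (ii), identity \eqref{id-1} is immediate: $D_t^{\alpha}[J_t^{\alpha}f]=\frac{d}{dt}J_t^{1-\alpha}J_t^{\alpha}f=\frac{d}{dt}J_t^{1}f=f$ a.e. For \eqref{id0} I would first establish the auxiliary formula
$\frac{d}{dt}J_t^{\alpha}g(t)=J_t^{\alpha}g'(t)+\frac{t^{\alpha-1}}{\Gamma(\alpha)}\,g(0)$, for $g\in W^{1,1}(0,T;X)$,
obtained cleanly by writing $g(s)=g(0)+\int_0^s g'(r)\,dr$, so that $J_t^{\alpha}g=g(0)\,t^{\alpha}/\Gamma(\alpha+1)+J_t^{\alpha+1}g'$, and differentiating the right-hand side using the semigroup law and $\frac{d}{dt}J_t^{1}=\mathrm{id}$. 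Applying this with $g=J_t^{1-\alpha}u\in W^{1,1}(0,T;X)$ and noting that $D_t^{\alpha}u=g'$ gives $J_t^{\alpha}D_t^{\alpha}u=J_t^{\alpha}g'=\frac{d}{dt}J_t^{\alpha}g-\frac{t^{\alpha-1}}{\Gamma(\alpha)}g(0)=\frac{d}{dt}J_t^{1}u-\frac{t^{\alpha-1}}{\Gamma(\alpha)}\big(J_s^{1-\alpha}u(s)|_{s=0}\big)=u(t)-\frac{t^{\alpha-1}}{\Gamma(\alpha)}\big(J_s^{1-\alpha}u(s)|_{s=0}\big)$, which is \eqref{id0}.

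Items (iii)--(vi) then follow by short reductions. For (iii): if $f\in C([0,T];X)$ then $f$ is bounded, so $J_t^{\alpha}f(0)=0$ and $\capt[J_t^{\alpha}f]=D_t^{\alpha}[J_t^{\alpha}f]=f$, giving \eqref{id1}; under the extra hypothesis, applying \eqref{id0} to $u=f-f(0)$ (whose $J_t^{1-\alpha}$ is again in $W^{1,1}$ because $t^{1-\alpha}\in W^{1,1}(0,T)$) and observing that $\big(J_s^{1-\alpha}(f-f(0))(s)|_{s=0}\big)=0$ (since $\|J_t^{1-\alpha}\phi(t)\|_X\le \|\phi\|_{\infty}\,t^{1-\alpha}/\Gamma(2-\alpha)\to0$ for bounded $\phi$, and $J_t^{1-\alpha}1=t^{1-\alpha}/\Gamma(2-\alpha)$) yields \eqref{id2}; composing with $J_t^{1-\alpha}$ and using the semigroup law gives \eqref{id3}. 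For (iv): if $f\in W^{1,1}(0,T;X)$ then $f-f(0)=J_t^{1}f'$, so $\capt f=D_t^{\alpha}J_t^{1}f'=\frac{d}{dt}J_t^{2-\alpha}f'=\frac{d}{dt}J_t^{1}\big(J_t^{1-\alpha}f'\big)=J_t^{1-\alpha}f'$ a.e., which is \eqref{id4}; when moreover $f(0)=0$ the same chain gives $\frac{d}{dt}J_t^{1-\alpha}f=\frac{d}{dt}J_t^{1}\big(J_t^{1-\alpha}f'\big)=J_t^{1-\alpha}f'$, i.e.\ \eqref{id5}. For (v): since $\lceil\alpha+1\rceil=2$ we have $\capto^{\alpha+1}f=\frac{d^2}{dt^2}J_t^{1-\alpha}[f-f(0)-f'(0)t]$; setting $h:=f-f(0)-f'(0)t$, which satisfies $h(0)=0$ and (because $f\in C^1$) $h=J_t^{1}h'$, one gets $J_t^{1-\alpha}h=J_t^{1}\big(J_t^{1-\alpha}h'\big)$, hence $\frac{d^2}{dt^2}J_t^{1-\alpha}h=\frac{d}{dt}\big(J_t^{1-\alpha}(f'-f'(0))\big)=\capt f'$, the hypothesis $J_t^{1-\alpha}f\in W^{2,1}(0,T;X)$ being exactly what legitimizes the two differentiations. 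Finally, for (vi): by \eqref{id4} and the triangle inequality $\|\capt f(t)\|_X\le J_t^{1-\alpha}\big(\|f'(\cdot)\|_X\big)(t)$, and Cauchy--Schwarz applied to the splitting $(t-s)^{-\alpha}=(t-s)^{-\alpha/2}(t-s)^{-\alpha/2}$, together with $\int_0^t(t-s)^{-\alpha}\,ds=t^{1-\alpha}/(1-\alpha)$ and $t\le T$, delivers \eqref{id7}.

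The semigroup algebra is routine. The main obstacle is the careful justification, under the deliberately minimal regularity hypotheses, of the Leibniz-type formula used in (ii) and iterated in (v) — i.e.\ the legitimacy of commuting $d/dt$ with the singular Riemann--Liouville integral — and the accompanying verification that the relevant initial traces $\big(J_t^{1-\alpha}(\cdot)\big)(0)$ vanish, so that the boundary terms take precisely the asserted form; once these are in place, the rest is bookkeeping with the semigroup property.
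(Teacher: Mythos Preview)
Your proposal is correct and follows essentially the same approach as the paper. The paper is terser---it dispatches \eqref{prop07} as ``follows from the definition'' and cites external references for \eqref{id-1}--\eqref{id5}---but your self-contained arguments (the pointwise monotonicity $(t-s)^{\alpha-1}\ge T^{\alpha-1}$ for (i), the semigroup law plus Lebesgue differentiation for (ii)--(iv), and the Leibniz-type formula for $\frac{d}{dt}J_t^{\alpha}g$) are exactly the computations those references contain; for (v) and (vi) your argument and the paper's are virtually identical, down to the splitting $(t-s)^{-\alpha}=(t-s)^{-\alpha/2}(t-s)^{-\alpha/2}$ in the H\"older step.
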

\begin{proof}

\noindent The proof of \eqref{prop07} follows from the definition. For the proof of (\ref{id-1})-(\ref{id5}) we refer the reader to \cite[Proposition 2.35]{Car1} and \cite[Remark 2.10]{CarFe0}. Applying the Leibniz integral rule in conjunction with \eqref{id5}, we have
$$
\capto^{\alpha+1} f (t)=\dfrac{d^2}{dt^2}\Big\{J_t^{1-\alpha}\big[f(t)-f(0)-tf'(0)\big]\Big\}
=\dfrac{d}{dt}\Big\{J_t^{1-\alpha}\big[f^\prime(t)-f^\prime(0)\big]\Big\} =   \capt f^\prime(t),
$$
in $X$, for almost every $t\in[0,T]$, therefore \eqref{id6} holds.

To prove the estimate (\ref{id7}), we observe that \eqref{id4} guarantees
$$
\left\|\capt f(t)\right\|_X^2
 \leq  \left[ J^{1-\alpha}_t \left\|f'(t) \right\|_X \right]^2  =  \left[ \int_0^t \left(\frac{(t-s)^{- \frac{\alpha}{2}}}{\Gamma(1-\alpha)^{\frac{1}{2}}}\right)\left(\frac{(t-s)^{- \frac{\alpha}{2}}}{\Gamma(1-\alpha)^{\frac{1}{2}}}\right) \left\|f'(t)\right\|_X ds\right]^2.
$$
Therefore, Holder's inequality gives
$$
\left\|\capt f(t) \right\|_X^2
\leq \left[\frac{t^{1-\alpha}}{\Gamma(2-\alpha)}\right] J^{1-\alpha}_t \left\|f'(t)\right\|_X^2 \leq  \left[\frac{T^{1-\alpha}}{\Gamma(2-\alpha)}\right] J^{1-\alpha}_t \left\|f'(t)\right\|_X^2,
$$
for almost every $t\in[0,T]$, which completes the proof.
\end{proof}

At this point, we emphasize that the remaining propositions of this section are original (and important) contributions  to the theory, as far as the authors are aware. These propositions are essential for applying the following theorem, originally formulated in \cite[Theorem 4.11]{CarFe0}, which is presented below and used throughout this work.

\begin{theorem}\label{finalcaputo} Assume that $f\in C([0,T])$, $J_t^{1-\alpha}f\in W^{1,1}(0,T)$ and $J_t^{1-\alpha}f^2\in W^{1,1}(0,T)$. Then,
\begin{equation*}
cD_{t}^\alpha\big[f(t)\big]^2\leq2\Big[cD_{t}^\alpha f(t)\Big]f(t),\quad \textrm{for almost every }t\in [0,T].
\end{equation*}
\end{theorem}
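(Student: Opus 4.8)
The plan is to prove the pointwise inequality first for smooth $f$, where it collapses to an explicit identity with a visibly nonnegative right‑hand side, and then to reach the general case by a mollification argument; throughout I take $0<\alpha<1$ (for $\alpha=1$ one has $\capt f=f'$ and the statement is trivial). A preliminary reduction removes the value at the origin: writing $c=f(0)$ and $g=f-c$, linearity of $\capt$ together with $\capt[\,c\,]=0$ gives
\[
2\big[\capt f(t)\big]f(t)-\capt\big[f(t)\big]^2=2\big[\capt g(t)\big]g(t)-\capt\big[g(t)\big]^2 ,
\]
and the hypotheses $J_t^{1-\alpha}g\in W^{1,1}(0,T)$, $J_t^{1-\alpha}g^2\in W^{1,1}(0,T)$ follow from those on $f$ using $J_t^{1-\alpha}1=t^{1-\alpha}/\Gamma(2-\alpha)$. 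Hence I may assume $f(0)=0$, in which case $\capt f=D_t^\alpha f=\big(J_t^{1-\alpha}f\big)'\in L^1(0,T)$ and, likewise, $\capt f^2=\big(J_t^{1-\alpha}f^2\big)'\in L^1(0,T)$.

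\textbf{Step 1 (smooth case).} Let $f\in C^1([0,T])$ with $f(0)=0$. By \eqref{id4}, $\capt f=J_t^{1-\alpha}f'$ and $\capt f^2=J_t^{1-\alpha}(2ff')$, so
\[
2f(t)\capt f(t)-\capt f^2(t)=\frac{2}{\Gamma(1-\alpha)}\int_0^t(t-s)^{-\alpha}f'(s)\big[f(t)-f(s)\big]\,ds .
\]
Fixing $t$ and setting $w(s)=f(t)-f(s)$, the integrand equals $-\tfrac12(t-s)^{-\alpha}(w^2)'(s)$, and an integration by parts — whose boundary term at $s=t$ vanishes because $|w(s)|\le\|f'\|_\infty(t-s)$ makes it $O\big((t-s)^{2-\alpha}\big)$ — yields
\[
2f(t)\capt f(t)-\capt f^2(t)=\frac{[f(t)]^2}{\Gamma(1-\alpha)\,t^{\alpha}}+\frac{\alpha}{\Gamma(1-\alpha)}\int_0^t\frac{[f(t)-f(s)]^2}{(t-s)^{\alpha+1}}\,ds\ \ge\ 0
\]
for every $t\in(0,T]$, which settles the $C^1$ case.

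\textbf{Step 2 (approximation and limit).} For general $f\in C([0,T])$ with $f(0)=0$, extend $f$ to a bounded continuous $\hat f$ on $\mathbb{R}$ that vanishes on $(-\infty,0]$, choose a mollifier $\rho_\epsilon\in C_c^\infty$ supported in $(0,\epsilon)$, and set $f_\epsilon=\rho_\epsilon*\hat f\in C^\infty$. Then $f_\epsilon\to f$ uniformly on $[0,T]$ and $f_\epsilon\equiv 0$ on $(-\infty,0]$, so Step 1 applies to $f_\epsilon$. Fix $\psi\in C_c^\infty\big((0,T)\big)$ with $\psi\ge 0$, integrate $2f_\epsilon\capt f_\epsilon-\capt f_\epsilon^2\ge 0$ against $\psi$, and let $\epsilon\to 0$: for the term $\int_0^T\psi\,\capt f_\epsilon^2$, write $\capt f_\epsilon^2=\big(J_t^{1-\alpha}f_\epsilon^2\big)'$, move the derivative onto $\psi$, and use that $f_\epsilon^2\to f^2$ uniformly forces $J_t^{1-\alpha}f_\epsilon^2\to J_t^{1-\alpha}f^2$ uniformly (boundedness of $J_t^{1-\alpha}$ on $C([0,T])$), so this term converges to $\int_0^T\psi\,\capt f^2$; for the term $\int_0^T 2f_\epsilon\psi\,\capt f_\epsilon$, observe that since $J_t^{1-\alpha}$ and $\rho_\epsilon*\,\cdot$ are convolutions and $\hat f$ is supported on $[0,\infty)$, associativity gives
\[
\capt f_\epsilon=\big(J_t^{1-\alpha}f_\epsilon\big)'=\big(\rho_\epsilon*J_t^{1-\alpha}\hat f\big)'=\rho_\epsilon*\big(J_t^{1-\alpha}\hat f\big)'=\rho_\epsilon*\capt f\quad\text{on }(0,T),
\]
hence $\capt f_\epsilon\to\capt f$ in $L^1(0,T)$, and combined with $f_\epsilon\psi\to f\psi$ uniformly this term converges to $\int_0^T 2f\psi\,\capt f$. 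Therefore $\int_0^T\big(2f\capt f-\capt f^2\big)\psi\ge 0$ for every nonnegative $\psi\in C_c^\infty\big((0,T)\big)$, and since $2f\capt f-\capt f^2\in L^1(0,T)$, the inequality $\capt[f(t)]^2\le 2[\capt f(t)]f(t)$ holds for almost every $t\in[0,T]$.

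The step I expect to be the main obstacle is exactly this passage to the limit: because $f$ is merely continuous (not of bounded variation), the derivatives of the mollifications $f_\epsilon$ blow up in $L^1$, so one cannot pass to the limit naively inside the product $2f_\epsilon\capt f_\epsilon$; the device that rescues the argument is the identity $\capt f_\epsilon=\rho_\epsilon*\capt f$, which relies essentially on first normalizing $f(0)=0$ and on extending everything to be supported on $[0,\infty)$, so that the singular factor converges in $L^1$ while the continuous factor converges uniformly. Secondary points that require care are the vanishing of the boundary term at $s=t$ in the integration by parts of Step 1 and the verification that the regularity hypotheses are inherited by $g=f-f(0)$ (immediate from linearity and $J_t^{1-\alpha}1=t^{1-\alpha}/\Gamma(2-\alpha)$) and by the $f_\epsilon$ (immediate, since $f_\epsilon\in C^\infty$ with $f_\epsilon(0)=0$).
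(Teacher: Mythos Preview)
The paper does not prove this theorem: it is quoted directly from \cite[Theorem~4.11]{CarFe0} and used as an input, so there is no proof in the present paper to compare your argument against.

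Your proof is correct and self-contained. Step~1 recovers, for $C^{1}$ functions with $f(0)=0$, the Alikhanov-type identity
\[
2f(t)\,\capt f(t)-\capt\big[f(t)\big]^{2}=\frac{[f(t)]^{2}}{\Gamma(1-\alpha)\,t^{\alpha}}+\frac{\alpha}{\Gamma(1-\alpha)}\int_{0}^{t}\frac{[f(t)-f(s)]^{2}}{(t-s)^{\alpha+1}}\,ds\ \ge\ 0,
\]
which is exactly the formula the paper itself invokes (via \cite{AlAhKi}) inside the proof of Proposition~\ref{teoju2}. The passage to the limit, which you rightly flag as the crux, is handled correctly: the normalization $f(0)=0$ together with a one-sided mollifier supported in $(0,\epsilon)$ makes all three factors in $g_{1-\alpha}\ast\rho_{\epsilon}\ast\hat f$ supported on $[0,\infty)$, so convolution associativity gives $J_{t}^{1-\alpha}f_{\epsilon}=\rho_{\epsilon}\ast J_{t}^{1-\alpha}\hat f$ on $(0,T)$; since $J_{t}^{1-\alpha}\hat f$ is continuous at $0$ with value $0$, its distributional derivative on $(-\infty,T)$ is the zero extension $\widetilde{\capt f}$, whence $\capt f_{\epsilon}=\rho_{\epsilon}\ast\widetilde{\capt f}\to\capt f$ in $L^{1}(0,T)$, and the product with the uniformly convergent factor $f_{\epsilon}\psi$ passes to the limit. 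The reduction to $f(0)=0$, the vanishing boundary term at $s=t$, and the final ``nonnegative against every test function implies nonnegative a.e.'' step are all clean. One small notational point: in the displayed chain $\capt f_{\epsilon}=\rho_{\epsilon}\ast\capt f$ the right-hand side should be read as the convolution with the zero extension of $\capt f$ to $(-\infty,0]$; this is what you mean and what the argument uses.
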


The following proposition helps us verify that, in certain circumstances, a function satisfies the hypotheses of Theorem \ref{finalcaputo}.

\begin{proposition} \label{teoju2}
If $u\in C([0,T])$ is such that $\capt u \in C([0,T])$, then $J^{1-\alpha}_t u^2 \in W^{1,1}(0,T)$.
\end{proposition}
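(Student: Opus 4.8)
The plan is to invert the Caputo derivative, reduce the claim to a smoothing estimate for $J_t^{1-\alpha}$ applied to a square, and then close that estimate by a Fubini computation together with one elementary kernel bound. I work with $0<\alpha<1$ (when $\alpha=1$ the hypothesis forces $u\in C^1([0,T])$, hence $u^2\in C^1([0,T])\subset W^{1,1}(0,T)$ and $J_t^0u^2=u^2$). Since $\capt u$ is defined we have $J_t^{1-\alpha}u\in W^{1,1}(0,T)$, so \eqref{id2} gives $u=u(0)+J_t^\alpha v$ with $v:=\capt u\in C([0,T])$. Put $w:=J_t^\alpha v\in C([0,T])$, so $w(0)=0$. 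From $u^2=u(0)^2+2u(0)w+w^2$ and $J_t^{1-\alpha}J_t^\alpha=J_t^{1}$,
\[
J_t^{1-\alpha}u^2=\frac{u(0)^2\,t^{1-\alpha}}{\Gamma(2-\alpha)}+2u(0)\,J_t^{1}v+J_t^{1-\alpha}w^2,
\]
and the first two summands lie in $W^{1,1}(0,T)$ (their derivatives are $u(0)^2t^{-\alpha}/\Gamma(1-\alpha)\in L^1(0,T)$ and $2u(0)v\in C([0,T])$). Thus it suffices to prove $J_t^{1-\alpha}w^2\in W^{1,1}(0,T)$.

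Inserting $w(s)^2=\Gamma(\alpha)^{-2}\iint_{p,q<s}(s-p)^{\alpha-1}(s-q)^{\alpha-1}v(p)v(q)\,dp\,dq$ into $J_t^{1-\alpha}w^2(t)=\Gamma(1-\alpha)^{-1}\int_0^t(t-s)^{-\alpha}w(s)^2\,ds$ and using Tonelli yields
\[
J_t^{1-\alpha}w^2(t)=\frac{1}{\Gamma(1-\alpha)\Gamma(\alpha)^2}\iint_{\,0<p,q<t}v(p)v(q)\,K(t,p,q)\,dp\,dq,\qquad
K(t,p,q):=\int_{p\vee q}^{t}\frac{(s-p)^{\alpha-1}(s-q)^{\alpha-1}}{(t-s)^{\alpha}}\,ds.
\]
The substitution $s=(p\vee q)+\bigl(t-(p\vee q)\bigr)\tau$ shows, for $p\neq q$,
\[
K(t,p,q)=\int_0^1(1-\tau)^{-\alpha}\tau^{\alpha-1}\bigl[\,|p-q|+(t-p\vee q)\tau\,\bigr]^{\alpha-1}\,d\tau,
\]
so $K(\cdot,p,q)$ extends continuously up to $q=t$ with limit $(t-p)^{\alpha-1}B(\alpha,1-\alpha)$ (symmetrically up to $p=t$), and $\iint_{p,q<t}K\,dp\,dq<\infty$ for every $t$.

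Differentiating the representation in $t$: the fluxes through the moving edges $\{q=t\}$ and $\{p=t\}$ contribute, using the boundary limit of $K$ and $B(\alpha,1-\alpha)=\Gamma(\alpha)\Gamma(1-\alpha)$, exactly $2v(t)w(t)$, while the interior piece involves $\partial_tK(t,p,q)=-(1-\alpha)\int_0^1(1-\tau)^{-\alpha}\tau^{\alpha}\bigl[|p-q|+(t-p\vee q)\tau\bigr]^{\alpha-2}\,d\tau$, so that
\[
\frac{d}{dt}J_t^{1-\alpha}w^2(t)=2v(t)w(t)+\frac{1}{\Gamma(1-\alpha)\Gamma(\alpha)^2}\iint_{\,0<p,q<t}v(p)v(q)\,\partial_tK(t,p,q)\,dp\,dq.
\]
Carrying out the $t$-integration before the $\tau$-integration gives, for $p<q$, the bound $\int_q^T|\partial_tK(t,p,q)|\,dt\le B(\alpha,1-\alpha)(q-p)^{\alpha-1}$, whence
\[
\int_0^T\!\!\iint_{\,0<p,q<t}\!\!|\partial_tK|\,dp\,dq\,dt\;\le\;2B(\alpha,1-\alpha)\!\!\iint_{0<p<q<T}\!\!(q-p)^{\alpha-1}\,dp\,dq\;=\;\frac{2B(\alpha,1-\alpha)\,T^{\alpha+1}}{\alpha(\alpha+1)}\;<\;\infty.
\]
Hence $t\mapsto\iint_{p,q<t}|\partial_tK(t,p,q)|\,dp\,dq$ belongs to $L^1(0,T)$, and since $v$ is bounded the displayed derivative is dominated in modulus by $2\|v\|_{C([0,T])}\|w\|_{C([0,T])}+\Gamma(1-\alpha)^{-1}\Gamma(\alpha)^{-2}\|v\|_{C([0,T])}^2\iint_{p,q<t}|\partial_tK|\,dp\,dq\in L^1(0,T)$. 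As $J_t^{1-\alpha}w^2$ is continuous on $[0,T]$ with $J_t^{1-\alpha}w^2(0)=0$ and distributional derivative in $L^1(0,T)$, it is absolutely continuous; thus $J_t^{1-\alpha}w^2\in W^{1,1}(0,T)$, and with the reduction above $J_t^{1-\alpha}u^2\in W^{1,1}(0,T)$.

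The main obstacle is the differentiation step, because the double integral defining $\tfrac{d}{dt}J_t^{1-\alpha}w^2$ is only absolutely convergent after integrating in $t$; differentiating under the integral sign and extracting the edge fluxes must be justified, e.g. by replacing $\{p,q<t\}$ with $\{p,q<t-\varepsilon\}$, differentiating there (now legitimate, all integrands being integrable), and letting $\varepsilon\downarrow0$ with the help of the $L^1$-in-$t$ bound on $\iint|\partial_tK|$. An alternative that sidesteps this is to prove the statement first for $v\in C^1([0,T])$ — there $w=J_t^\alpha v\in W^{1,1}(0,T)$, $w^2\in W^{1,1}(0,T)$ with $w^2(0)=0$, and \eqref{id5} gives $J_t^{1-\alpha}w^2\in W^{1,1}(0,T)$ at once — and then approximate $v$ uniformly by $v_k\in C^1([0,T])$: the kernel bound makes $\{\tfrac{d}{dt}J_t^{1-\alpha}w_k^2\}_k$ dominated by a single $L^1(0,T)$ function, hence equi-integrable, so a Dunford–Pettis / weak-$L^1$ compactness argument passes to the limit $k\to\infty$.
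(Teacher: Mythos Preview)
Your reduction to $J_t^{1-\alpha}w^2$ with $w=J_t^\alpha v$, $v=\capt u\in C([0,T])$, is exactly the paper's first move; the three-term expansion of $J_t^{1-\alpha}u^2$ and the handling of the first two summands are identical. The genuine divergence is in the last term. The paper observes that $w=J_t^\alpha v\in C^{0,\alpha}([0,T])$ by Hardy--Littlewood, then invokes a pointwise identity of Alsaedi--Ahmad--Kirane,
\[
D_t^\alpha w^2(t)=2w(t)v(t)-\frac{\alpha}{\Gamma(1-\alpha)}\int_0^t\frac{[w(t)-w(s)]^2}{(t-s)^{\alpha+1}}\,ds-\frac{w(t)^2}{\Gamma(1-\alpha)\,t^{\alpha}},
\]
and reads off $D_t^\alpha w^2\in L^1(0,T)$ directly from the H\"older regularity of $w$ (the singular integral is controlled by $\int_0^t(t-s)^{\alpha-1}\,ds$, and $|w(t)|\le Ct^\alpha$ handles the last term). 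Your route instead unfolds $w^2$ as a double convolution, introduces the kernel $K(t,p,q)$, differentiates in $t$, and closes with a Fubini bound on $\int_0^T\!\iint|\partial_tK|$. The two formulas for $D_t^\alpha w^2-2vw$ are in fact the same object written differently. What the paper's approach buys is brevity and a clean sufficient condition (H\"older-$\alpha$) that makes the $L^1$ membership immediate; what your approach buys is self-containment (no external lemma) and an explicit $L^1$ estimate in terms of $\|v\|_{C([0,T])}$ alone. Your identification of the differentiation-under-the-integral step as the only delicate point is accurate; the cleanest justification is the one you allude to---integrate the candidate derivative $g(r)=2v(r)w(r)+c\iint_{p,q<r}v(p)v(q)\,\partial_rK\,dp\,dq$ over $[0,t]$, apply Fubini (licensed by your $L^1$ bound) together with $\int_{p\vee q}^t\partial_rK\,dr=K(t,p,q)-|p-q|^{\alpha-1}B(\alpha,1-\alpha)$, and recover $J_t^{1-\alpha}w^2(t)=\int_0^t g$. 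This avoids both the $\varepsilon$-truncation and the density argument.
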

\begin{proof} From the continuity of the RL fractional integral (cf. \cite[Theorem 14]{HaLi}) we have that $J^{1-\alpha}_t u\in C([0,T])$. Then, that the identity
$$D_t^\alpha u(t)=\capt u+\dfrac{u(0)t^{-\alpha}}{\Gamma(1-\alpha)},$$
which holds for a.e. $t\in[0,T]$, allows us to deduce that $J_t^{1-\alpha}u\in W^{1,1}(0,T)$. Therefore, if we define $h(t)=\capt u(t)$, we have that \eqref{id2} ensures the identity
\begin{equation*}
u(t)-u(0) = J^\alpha_t h(t),
\end{equation*}
for a.e. $t\in[0,T]$. Consequently,
\begin{multline*}
J^{1-\alpha}_t u^2(t) = J^{1-\alpha}_t \big[ u(0)+J^\alpha_t h(t) \big]^2 \\= \big[u(0)\big]^2 \frac{t^{1-\alpha}}{\Gamma(2-\alpha)} +2u(0)\big[J^1_t h(t)\big] + J^{1-\alpha}_t \big[ J^\alpha_t h(t) \big]^2.
\end{multline*}

It is evident that the first two terms on the right side of the above equality belong to $W^{1,1}(0,T)$. The first follows from direct computation, while the second relies on the continuity of the RL fractional integral and the fact that $h\in C([0,T])$.

To complete the proof, we assert that $J^{1-\alpha}_t \left[ J^\alpha_t h \right]^2\in W^{1,1}(0,T)$. We only need to verify that $D^\alpha_t [J^\alpha_t h]^2\in L^1(0,T)$, since it follows from the continuity of the RL fractional integral of order $1-\alpha$ from $L^1(0,T)$ into $L^1(0,T)$ (cf. \cite[Theorem 4]{HaLi}), that $J^{1-\alpha}_t \left[ J^\alpha_t h \right]^2\in L^1(0,T)$.

Since $h \in C([0,T])$, we have that $J^\alpha_th(t)$ is Hölder continuous with exponent $\alpha$ on $[0,T]$, or simply, $J^\alpha_th \in C^{0,\alpha}([0,T])$ (cf. \cite[Theorem 14]{HaLi}). Thus, we can apply \cite[Lemma 1]{AlAhKi} and \eqref{id-1} to obtain
\begin{eqnarray*}
D^\alpha_t [J^\alpha_t h(t)]^2 = 2 \big[J^\alpha_t h (t)\big]h (t) - \frac{\alpha}{\Gamma(1-\alpha)} \int_0^t \frac{[J^\alpha_t h (t)-J^\alpha_s h (s)]^2}{(t-s)^{\alpha+1}}ds - \frac{[J^\alpha_t h (t)]^2}{\Gamma(1-\alpha)t^\alpha},
\end{eqnarray*}
for a.e. $t\in[0,T]$. We finish this proof by noting that the above equality together with the fact that $J^\alpha_th \in C^{0,\alpha}([0,T])$ is enough for us to deduce that $D^\alpha_t [J^\alpha_t h]^2\in L^1(0,T)$.
\end{proof}

We conclude this section with a result on regularity, crucial for demonstrating $u^{\prime}(0) = u_{1}$ in $H^1_{\Gamma_0}(\Omega)$ during Step 3: Passage to the limit in Section \ref{existenceresults}.
\begin{proposition} \label{teoderivcont}
If $ u \in L^\infty(0,T;X)$ and $D^\alpha_t u \in L^\infty(0,T;X)$ then $u \in C([0,T];X)$ and $u(0)=0$.
\end{proposition}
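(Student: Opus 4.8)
The plan is to use the Riemann--Liouville machinery, specifically identity \eqref{id0} from Proposition \ref{prop1}(ii), which expresses $J^\alpha_t[D^\alpha_t u]$ in terms of $u$ and the boundary value $J^{1-\alpha}_s u(s)|_{s=0}$. The key point is that the hypothesis $u \in L^\infty(0,T;X)$ forces the RL fractional integral $J^{1-\alpha}_t u$ to be not merely in $W^{1,1}$ but in fact Hölder continuous up to $t=0$ with value zero there, so the correction term in \eqref{id0} vanishes; this then realizes $u$ (a priori only in $L^\infty$) as $J^\alpha_t$ applied to the $L^\infty$ function $D^\alpha_t u$, and $J^\alpha_t$ maps $L^\infty(0,T;X)$ into $C([0,T];X)$ with zero initial value.

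First I would verify that the hypotheses of \eqref{id0} are met: since $u \in L^1(0,T;X)$ we need $J^{1-\alpha}_t u \in W^{1,1}(0,T;X)$. This follows because $D^\alpha_t u = \frac{d}{dt}[J^{1-\alpha}_t u]$ by definition of the RL derivative (with $\lceil\alpha\rceil = 1$), and the assumption $D^\alpha_t u \in L^\infty(0,T;X) \subset L^1(0,T;X)$ says precisely that this distributional derivative is an $L^1$ function; combined with $J^{1-\alpha}_t u \in L^1(0,T;X)$ (continuity of the RL integral on $L^1$), we get $J^{1-\alpha}_t u \in W^{1,1}(0,T;X)$. Next I would argue that the boundary term $c_0 := J^{1-\alpha}_s u(s)|_{s=0}$ is zero. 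The idea is that for $u \in L^\infty(0,T;X)$ one has the crude bound $\|J^{1-\alpha}_t u(t)\|_X \le \frac{1}{\Gamma(1-\alpha)}\int_0^t (t-s)^{-\alpha}\|u(s)\|_X\,ds \le \frac{\|u\|_{L^\infty}}{\Gamma(2-\alpha)} t^{1-\alpha} \to 0$ as $t \to 0^+$; since $J^{1-\alpha}_t u$ has an $L^1$ derivative it is (a.e. equal to) an absolutely continuous function, hence its genuine limit at $0$ exists and equals $0$, so $c_0 = 0$.

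With $c_0 = 0$, identity \eqref{id0} reduces to $u(t) = J^\alpha_t[D^\alpha_t u(t)]$ for a.e. $t \in [0,T]$. Now $D^\alpha_t u \in L^\infty(0,T;X)$, and $J^\alpha_t$ applied to an $L^\infty$ function produces a function that is Hölder-$\alpha$ continuous on $[0,T]$ (the same Hölder-continuity estimate for the fractional integral of a bounded function, cf. the argument already used in Proposition \ref{teoju2} via \cite[Theorem 14]{HaLi}), and whose value at $t=0$ is $0$ by the same vanishing estimate as above. Hence, after modification on a null set, $u \in C([0,T];X)$ with $u(0) = 0$, which is the claim. The main obstacle is the rigorous justification that the RL derivative being $L^1$ upgrades $J^{1-\alpha}_t u$ from an a.e.-defined object to a genuine absolutely continuous representative so that the pointwise limit at $0$ (and hence the vanishing of $c_0$) is meaningful; once that is in place, everything else is a direct application of the mapping properties of $J^\alpha_t$ and the already-established identities.
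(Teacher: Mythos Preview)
Your proposal is correct and follows the same architecture as the paper's proof: verify $J^{1-\alpha}_t u \in W^{1,1}(0,T;X)$, show the boundary term $J^{1-\alpha}_s u(s)|_{s=0}$ vanishes, apply identity \eqref{id0} to get $u = J^\alpha_t[D^\alpha_t u]$ a.e., and read off continuity and $u(0)=0$ from the mapping properties of $J^\alpha_t$ on $L^\infty$.

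The only tactical difference is in how the vanishing at $t=0$ is obtained. You use the direct estimate $\|J^{1-\alpha}_t u(t)\|_X \le \frac{\|u\|_{L^\infty}}{\Gamma(2-\alpha)}t^{1-\alpha}$ (and likewise for $J^\alpha_t[D^\alpha_t u]$), which is self-contained and in fact dissolves the ``main obstacle'' you flag: since $u\in L^\infty$, the integral defining $J^{1-\alpha}_t u$ already yields a H\"older-continuous function on $[0,T]$ with value $0$ at $0$, so this \emph{is} the absolutely continuous representative and no further justification is needed. The paper instead invokes \cite[Theorem~3.5]{CarFe1} and a semigroup splitting $J^{1-\alpha}_t = J^\delta_t\, J^{1-\alpha-\delta}_t$ (with $0<\delta<\min\{\alpha,1-\alpha\}$) to first place $J^{1-\alpha-\delta}_t u$ in $C([0,T];X)$ and then observe that $J^\delta_t$ of a continuous function vanishes at $0$; the same device handles $J^\alpha_t[D^\alpha_t u]$. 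Your route is more elementary and avoids the external reference; the paper's route is slightly more structural. Either way the argument goes through.
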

\begin{proof}
For any $\varepsilon \in [0,\min\{\alpha,1-\alpha\})$, Theorem 3.5 in \cite{CarFe1} ensures that $J^{\alpha-\varepsilon}_t\big[D^\alpha_t u(t)\big]$ and $J^{1-\alpha-\epsilon}_t u(t)$ are in $C([0,T];X)$. Therefore, for $\delta \in (0,\min\{\alpha,1-\alpha\})$ we have
\begin{equation} \label{jduzero}
J^\alpha_t \big[D^\alpha_t u(t)\big] \big|_{t=0} =  J^\delta_t \big\{J^{\alpha-\delta}_t \big[D^\alpha_t u(t)\big]\big\} \big|_{t=0} = 0,
\end{equation}
and
\begin{equation} \label{jduzero1}
J_t^{1-\alpha}u(t) \big|_{t=0} =  J^\delta_t \big[J^{1-\alpha-\delta}_t u(t)\big]\big|_{t=0} = 0.
\end{equation}

Since follows from the hypotheses that $J^{1-\alpha}_t u \in W^{1,1}(0,T;X)$, identities \eqref{id0} and \eqref{jduzero1} ensure that
\begin{equation} \label{ujdu}
u(t) = J^\alpha_t \big[D^\alpha_t u(t)\big] + \frac{t^{\alpha-1}}{\Gamma(\alpha)} \left( J^{1-\alpha}_t u(t) |_{t=0} \right) = J^\alpha_t \big[D^\alpha_t u (t)\big],
\end{equation}
for a.e. $t\in[0,T]$. The proof is now complete, as we observe that $J^\alpha_t \big[D^\alpha_t u\big] \in C([0,T];X)$ and that \eqref{jduzero} together with \eqref{ujdu} guarantees $u(0)=0$.
\end{proof}

\section{A Generalization of Picard-Lindelöf Theorem}\label{picard}

For $\alpha \in (0,1], f:\Omega\times[0,T] \subset \mathbb{R}^{n+1}\rightarrow\mathbb{R}^n$ and $\xi\in \mathbb{R}^{n}$ given, the classical Cauchy problem for the fractional ordinary differential equation in $\mathbb{R}^n$ is the initial value problem
\begin{equation}\label{fracclassical}
\left\{\begin{array}{l}\capt \varphi(t)=f\big(\varphi(t),t\big),\quad\textrm{for }t\in[0,T],\\\varphi(0)=\xi.\end{array}\right.
\end{equation}

The well-posedeness for \eqref{fracclassical} is well-known and has been extensively studied in the literature, for instance see \cite{KiSrTr1,SaKiMa1} as few examples. In this section, we investigate a time-fractional ODE system that generalizes the Cauchy problem \eqref{fracclassical}. Our main goal here is to establish the existence and uniqueness of a solution to this generalized time-fractional ODE system. This result serves as an essential tool for applying the Faedo-Galerkin method in Section \ref{existenceresults}, and notably, to the best of the authors' knowledge, there is currently no formal proof available.

With this in mind,  we consider $\{\alpha_j\}_{j=1}^n\subset(0,1], f:\Omega\times[0,T] \subset \mathbb{R}^{n+1}\rightarrow\mathbb{R}^n$ and $\xi=(\xi_1,\cdots,\xi_n) \in {\mathbb{R}}^{n}$ given and we look for $\varphi = (\varphi_1 , \cdots , \varphi_n): [0,T] \to \mathbb{R}^n$ (the unknown function) satisfying the time-fractional ODE system given by the following set of equations:
\begin{equation}\label{caratheq}
\left\{
\begin{array}{cccl}
\capto^{\alpha_1} \varphi_1(t) &=& f_1\left(\varphi_1(t), \cdots , \varphi_n(t),t\right), & \textrm{for }t\in[0,T],\\
\vdots& & \vdots & \\
\capto^{\alpha_n} \varphi_n(t) &=& f_n\left(\varphi_1(t), \cdots , \varphi_n(t),t\right), & \textrm{for }t\in[0,T],
\end{array}
\right.
\end{equation}
subjected to the initial condition
\begin{equation}\label{carathin}
\varphi(0)= \xi.
\end{equation}

Let us begin by introducing the notion of a solution to the Cauchy problem \eqref{caratheq}-\eqref{carathin}.

\begin{definition}A function $\varphi = (\varphi_1 , \cdots , \varphi_n): [0,T] \to \mathbb{R}^n$ is said to be a solution of the Cauchy problem \eqref{caratheq}-\eqref{carathin} on $[0,T]$ if it satisfies the following conditions:
\begin{itemize}
\item[(i)] $\varphi$ and $t \mapsto (\capto^{\alpha_1}\varphi(t),\ldots,\capto^{\alpha_n}\varphi(t))$ belong to $C([0,T];\mathbb{R}^n)$;
\item[(ii)] $\{\varphi(t): t\in[0,T]\}\subset \Omega$;
\item[(iii)] $\varphi$ satisfies the equations \eqref{caratheq} for all $t\in [0,T]$ and the initial condition \eqref{carathin}.
\end{itemize}
\end{definition}

Now we present an auxiliar result that connects the solution of \eqref{caratheq}-\eqref{carathin} with the solution of an integral equation.

\begin{proposition}\label{integralequ}
Let $f: \Omega\times[0,T] \subset \mathbb{R}^{n+1} \to \mathbb{R}^n$ be a continuous function. Then $\varphi = (\varphi_1 , \cdots , \varphi_n): [0,T] \to \mathbb{R}^n$ is a solution of \eqref{caratheq}-\eqref{carathin} in $[0,T]$ if, and only if, $\varphi \in C([0,T];\mathbb{R}^n)$ and for all $j\in\{1,\cdots,n\}$ the function $\varphi_{j}$ satisfies the integral equation
\begin{equation} \label{eqint}
\varphi_j(t)=\xi_j+\dfrac{1}{\Gamma(\alpha_j)}\int_{0}^t{(t-s)^{\alpha_j-1}f_j\big(\varphi(s),s\big)}ds,\quad\forall t\in [0,T].
\end{equation}
\end{proposition}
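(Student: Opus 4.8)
The plan is to prove the equivalence by establishing each implication separately, relying on the fractional calculus identities collected in Proposition \ref{prop1} (particularly \eqref{id1} and \eqref{id2}) to pass between the differential and integral formulations componentwise.

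First I would prove the forward implication. Suppose $\varphi$ is a solution of \eqref{caratheq}-\eqref{carathin} in the sense of the definition above. Then $\varphi\in C([0,T];\mathbb{R}^n)$ is immediate from condition (i). Fix $j\in\{1,\dots,n\}$ and set $h_j(t)=f_j(\varphi(t),t)$; since $f$ is continuous and $\varphi\in C([0,T];\mathbb{R}^n)$ with range in $\Omega$, we have $h_j\in C([0,T])$, and by \eqref{caratheq} we have $h_j(t)=\capto^{\alpha_j}\varphi_j(t)$, which by condition (i) is also continuous. I would then need to check that $\varphi_j$ satisfies the regularity hypothesis required to apply \eqref{id2}, namely $J_t^{1-\alpha_j}\varphi_j\in W^{1,1}(0,T)$. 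This follows by the argument used in the proof of Proposition \ref{teoju2}: the pointwise identity $D_t^{\alpha_j}\varphi_j(t)=\capto^{\alpha_j}\varphi_j(t)+\varphi_j(0)t^{-\alpha_j}/\Gamma(1-\alpha_j)$ shows $D_t^{\alpha_j}\varphi_j\in L^1(0,T)$, hence $J_t^{1-\alpha_j}\varphi_j\in W^{1,1}(0,T)$. Then \eqref{id2} applied to $\varphi_j$ gives
\begin{equation*}
J_t^{\alpha_j}\big[\capto^{\alpha_j}\varphi_j(t)\big]=\varphi_j(t)-\varphi_j(0)=\varphi_j(t)-\xi_j
\end{equation*}
for a.e. $t\in[0,T]$, and substituting $\capto^{\alpha_j}\varphi_j=h_j$ and writing out the definition of $J_t^{\alpha_j}$ yields exactly \eqref{eqint} for a.e. $t$; since both sides are continuous in $t$ (the right-hand side by continuity of the RL integral of the continuous function $h_j$, cf. \cite[Theorem 14]{HaLi}), the identity holds for all $t\in[0,T]$.

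For the converse, suppose $\varphi\in C([0,T];\mathbb{R}^n)$ and each $\varphi_j$ satisfies \eqref{eqint}. Again write $h_j(t)=f_j(\varphi(t),t)\in C([0,T])$; then \eqref{eqint} reads $\varphi_j(t)=\xi_j+J_t^{\alpha_j}h_j(t)$. Setting $t=0$ gives $\varphi_j(0)=\xi_j$ (using $J_t^{\alpha_j}h_j(0)=0$, which holds since $J_t^{\alpha_j}h_j$ is $\alpha_j$-Hölder continuous and vanishes at $0$, cf. \cite[Theorem 14]{HaLi}), so \eqref{carathin} holds. Applying $\capto^{\alpha_j}$ to both sides, using that $\capto^{\alpha_j}$ annihilates the constant $\xi_j$ together with \eqref{id1} (valid since $h_j\in C([0,T])$), we obtain $\capto^{\alpha_j}\varphi_j(t)=\capto^{\alpha_j}\big[J_t^{\alpha_j}h_j(t)\big]=h_j(t)=f_j(\varphi(t),t)$ for a.e. $t\in[0,T]$. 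It remains to upgrade this to all $t$ and to verify condition (i), i.e.\ that $t\mapsto(\capto^{\alpha_1}\varphi_1(t),\dots,\capto^{\alpha_n}\varphi_n(t))$ is continuous: but we have just identified $\capto^{\alpha_j}\varphi_j$ with the continuous function $h_j$ a.e., so after redefining on a null set (or simply noting that \eqref{eqint} forces $J_t^{1-\alpha_j}\varphi_j\in W^{1,1}$ and the standard continuity results of \cite{CarFe1} apply) the equality $\capto^{\alpha_j}\varphi_j(t)=f_j(\varphi(t),t)$ holds for all $t\in[0,T]$, giving \eqref{caratheq}. Finally, condition (ii), $\{\varphi(t):t\in[0,T]\}\subset\Omega$, is needed merely for $h_j$ to be well-defined; it is implicitly part of the standing hypothesis that $f_j(\varphi(s),s)$ makes sense in \eqref{eqint}, so I would state it as an assumption on $\varphi$ in this direction.

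The only genuinely delicate point is the regularity bookkeeping: establishing $J_t^{1-\alpha_j}\varphi_j\in W^{1,1}(0,T)$ so that the inversion identities \eqref{id1}-\eqref{id2} of Proposition \ref{prop1} legitimately apply, and then passing from "a.e.\ $t$" to "all $t$" using continuity of the Riemann-Liouville integral of a continuous function. I expect this to be routine given the tools already assembled (the identity relating $D_t^\alpha$ and $\capto^\alpha$, Hölder continuity of $J_t^\alpha h$ for continuous $h$, and the continuity theorems of \cite{CarFe1,HaLi}), so the main obstacle is really organizational rather than conceptual: keeping the componentwise argument clean since each component carries its own order $\alpha_j$.
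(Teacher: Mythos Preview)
Your proposal is correct and follows essentially the same route as the paper: both directions hinge on the inversion identities \eqref{id1}--\eqref{id2} applied componentwise, with continuity of $t\mapsto f_j(\varphi(t),t)$ doing the work. The only cosmetic differences are that the paper invokes $J_t^{1-\alpha_j}[\varphi_j-\varphi_j(0)]\in C^1$ directly from condition (i) of the solution definition rather than via the $D_t^{\alpha_j}$--$\capto^{\alpha_j}$ identity, and verifies $\varphi_j(0)=\xi_j$ in the converse by the explicit bound $|J_t^{\alpha_j}h_j(t)|\leq Ct^{\alpha_j}$ rather than citing H\"older continuity---but these are the same arguments in slightly different dress.
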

\begin{proof}
Assuming that $\varphi$ is a solution of \eqref{caratheq}-\eqref{carathin} in the interval $[0,T]$, we can observe that $t \mapsto J_t^{1-\alpha_j} [\varphi_j(t)-\varphi_j(0)]$ belongs to $C^1([0,T];\mathbb{R})$ for each $1\leq j\leq n$. Consequently, by applying $J^{\alpha_j}_t$ to the $j$-th equation in \eqref{caratheq} and employing (\ref{id2})  we come to
\begin{equation*}
\varphi_j(t) - \varphi_j(0) = J^{\alpha_j}_t f_j(\varphi(t),t), \quad \forall t \in [0,T].
\end{equation*}
This equality and \eqref{carathin} yields \eqref{eqint}.

Conversely assume that $\varphi \in C([0,T];\mathbb{R}^n)$ and \eqref{eqint} holds, for all $1\leq j\leq n$. Then  the continuity of each $t \mapsto f_j(\varphi(t),t)$ gives $t \mapsto J^{\alpha_j}_t f_j(\varphi(t),t)$ belongs to $C([0,T];\mathbb{R})$, for all $1 \leq j \leq n$.  Applying $\capto^{\alpha_j}$ to both sides of \eqref{eqint} and taking into account \eqref{id1} we get
\begin{equation*}
\capto^{\alpha_j} \varphi_j(t) = f_j(\varphi(t),t), \quad \forall t \in [0,T], 1\leq j \leq n,
\end{equation*}
which means that $\varphi$ satisfies the equations \eqref{caratheq} for all $t\in [0,T]$. Finally, to show that $\varphi$ satisfies \eqref{carathin} we observe that
$$
\vert J^{\alpha_j}_t f_j(\varphi(t),t) \vert_{\mathbb{R}} \leq \frac{1}{\Gamma(\alpha_j)} \int_0^t (t-s)^{\alpha_j-1} \|f(\varphi(\cdot),\cdot)\|_{C([0,T];\mathbb{R}^n)} \, ds = C \,  t^{\alpha_j},
$$
where $C= {\|f(\varphi(\cdot),\cdot)\|_{C([0,T];\mathbb{R}^n)}}/{\Gamma(\alpha_j+1)}$. From this inequatily we have
$$
\lim_{t \to 0^{+}} \vert J^{\alpha_j}_t f_j(\varphi(t),t) \vert_{\mathbb{R}} = 0,
$$
and therefore
$$
\lim_{t \to 0^{+}} \dfrac{1}{\Gamma(\alpha_j)}\int_{0}^t{(t-s)^{\alpha_j-1}f_j\big(\varphi(s),s\big)}\,ds = 0\,.
$$
This leads to \eqref{carathin} by taking the limit as $t \to 0^{+}$ in \eqref{eqint}. Whence we have proved that $\varphi$ is a solution of \eqref{caratheq}-\eqref{carathin} in the interval $[0,T]$ and the proof is completed.
\end{proof}

We say that a function $f:\Omega\times[0,T] \subset \mathbb{R}^{n+1} \to \mathbb{R}^n$ is Lipschitz function on the first variable if there exists a constant $\ell >0$ such that
$$
\|f(x,t) - f(y,t) \|_{\mathbb{R}^n} \leq \ell \, \|x-y\|_{\mathbb{R}^n},  \textrm{ for all } t \in [0,T] \textrm{ and } x,y \in \Omega.
$$

Now we face up to the main result of this section:
\begin{theorem}{\label{teo2}}
Let $f:\Omega\times[0,T] \subset \mathbb{R}^{n+1} \to \mathbb{R}^n$ be continuous and Lipschitz function on the first variable. Then for each $\xi\in \Omega$ there exists an unique $\varphi$ solution of the Cauchy problem \eqref{caratheq}-\eqref{carathin} in the interval $[0,T]$.
\end{theorem}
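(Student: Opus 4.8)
The plan is to run a Picard-type iteration, but carried out in one shot on the whole interval $[0,T]$ rather than on a small subinterval, the extra ingredient being a growth estimate for iterated Riemann--Liouville kernels tailored to the presence of several distinct orders $\alpha_1,\dots,\alpha_n$. First I would invoke Proposition \ref{integralequ}: $\varphi$ solves \eqref{caratheq}--\eqref{carathin} on $[0,T]$ precisely when $\varphi\in C([0,T];\mathbb{R}^n)$ is $\Omega$-valued and is a fixed point of the operator $\Phi=(\Phi_1,\dots,\Phi_n)$ given componentwise by
\begin{equation*}
(\Phi\varphi)_j(t)=\xi_j+\frac{1}{\Gamma(\alpha_j)}\int_0^t(t-s)^{\alpha_j-1}f_j\big(\varphi(s),s\big)\,ds=\xi_j+J_t^{\alpha_j}\big[f_j(\varphi(\cdot),\cdot)\big](t).
\end{equation*}
Continuity of $f$ and of the Riemann--Liouville integral make $\Phi$ a well-defined self-map of $C([0,T];\mathbb{R}^n)$; since $\Omega$ is open and $\xi\in\Omega$ a genuinely $\Omega$-valued iterate exists at least on a short interval, and I postpone the globalization of $\Omega$-valuedness to the last step (when $\Omega=\mathbb{R}^n$, as in the Galerkin application of Section \ref{existenceresults}, this is vacuous; otherwise one may extend $f$ to $\mathbb{R}^n\times[0,T]$ keeping it Lipschitz in the first variable).

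\textbf{Step 1: a contraction among the iterates of $\Phi$.} Put $\alpha_*=\min_{1\le j\le n}\alpha_j$ and introduce the positivity-preserving operator $\mathcal{J}=\sum_{k=1}^n J_t^{\alpha_k}$ on $C([0,T])$. Using $|f_j(x,s)-f_j(y,s)|\le\ell\|x-y\|_{\mathbb{R}^n}$ and $\|\cdot\|_{\mathbb{R}^n}\le\|\cdot\|_{\ell^1}$ one gets the pointwise bound $\|(\Phi\varphi)(t)-(\Phi\psi)(t)\|_{\mathbb{R}^n}\le\ell\,(\mathcal{J}h)(t)$ with $h(s):=\|\varphi(s)-\psi(s)\|_{\mathbb{R}^n}$, whence by induction $\|(\Phi^m\varphi)(t)-(\Phi^m\psi)(t)\|_{\mathbb{R}^n}\le\ell^m(\mathcal{J}^m\mathbf 1)(t)\,\|\varphi-\psi\|_{C([0,T];\mathbb{R}^n)}$. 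Expanding $\mathcal{J}^m\mathbf 1$ and using the semigroup identity $J_t^aJ_t^b\mathbf 1=t^{a+b}/\Gamma(a+b+1)$,
\begin{equation*}
(\mathcal{J}^m\mathbf 1)(t)=\sum_{j_1,\dots,j_m=1}^n\frac{t^{\alpha_{j_1}+\cdots+\alpha_{j_m}}}{\Gamma(\alpha_{j_1}+\cdots+\alpha_{j_m}+1)}\le\frac{\big(n\max\{1,T\}\big)^m}{\Gamma(m\alpha_*+1)}\qquad\text{for all large }m,
\end{equation*}
since every exponent $\sigma=\alpha_{j_1}+\cdots+\alpha_{j_m}$ satisfies $m\alpha_*\le\sigma\le m$ and $\Gamma$ is eventually increasing. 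Thus $\|\Phi^m\varphi-\Phi^m\psi\|_C\le\big[(\ell n\max\{1,T\})^m/\Gamma(m\alpha_*+1)\big]\|\varphi-\psi\|_C$, and as the coefficients are the general term of the entire Mittag-Leffler series $E_{\alpha_*}$ they tend to $0$; choosing $m_0$ with the coefficient $<1$ makes $\Phi^{m_0}$ a strict contraction.

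\textbf{Step 2: existence, uniqueness, and the range condition.} By the Banach fixed point theorem applied to $\Phi^{m_0}$ (equivalently, by replacing the sup-norm with a Bielecki weight $\|\varphi\|_\lambda=\sup_t e^{-\lambda t}\|\varphi(t)\|_{\mathbb{R}^n}$ and taking $\lambda$ large), $\Phi$ has exactly one fixed point $\varphi\in C([0,T];\mathbb{R}^n)$, which already gives uniqueness. To conclude via Proposition \ref{integralequ} that $\varphi$ is a solution in the sense of the Definition it remains to check $\{\varphi(t):t\in[0,T]\}\subset\Omega$. I would argue by continuation: near $t=0$ the fixed point stays in a ball $\overline B(\xi,r)\subset\Omega$; from \eqref{eqint} and $\|f(x,s)\|_{\mathbb{R}^n}\le M+\ell\|x-\xi\|_{\mathbb{R}^n}$ with $M=\sup_{[0,T]}\|f(\xi,\cdot)\|_{\mathbb{R}^n}$, a fractional (Henry-type) Grönwall inequality bounds $\sup_{[0,T]}\|\varphi\|_{\mathbb{R}^n}$ by a constant depending only on $\xi,\ell,M,T$, so $\varphi$ cannot run off to the boundary before $T$; in the setting of interest — in particular $\Omega=\mathbb{R}^n$ — it is therefore $\Omega$-valued on all of $[0,T]$.

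\textbf{Expected main obstacle.} The crux is Step 1. With a single order the iterate $\Phi^m$ would just produce $J_t^{m\alpha}$ and the factor $t^{m\alpha}/\Gamma(m\alpha+1)$, but with several distinct $\alpha_j$ the $m$-th iterate is a sum over the $n^m$ words $(j_1,\dots,j_m)$ of kernels $\prod_i(t-s_i)^{\alpha_{j_i}-1}$, and one must extract from every one of them the same summable factor $\Gamma(m\alpha_*+1)^{-1}$ — the Mittag-Leffler growth — which is exactly what upgrades ``$\Phi$ is globally Lipschitz'' into ``$\Phi^{m_0}$ is a contraction on the full interval $[0,T]$'' with no smallness condition on $T$ and no loss from the coupling of the equations. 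The secondary, more routine, difficulty is the invariance of $\Omega$ along the orbit, which is where the openness of $\Omega$ and the fractional Grönwall estimate enter.
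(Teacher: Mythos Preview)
Your proposal is correct and follows essentially the same route as the paper: recast the problem as a fixed point of the integral operator via Proposition \ref{integralequ}, then show some iterate $\Phi^{m_0}$ is a contraction on $C([0,T];\mathbb{R}^n)$ by bounding $\mathcal{J}^m$ through a Mittag--Leffler--type coefficient $(\text{const})^m/\Gamma(m\alpha_*+1)$. The only cosmetic difference is bookkeeping of the key estimate --- the paper factors $\sum_k J_t^{\alpha_k}=(\sum_k J_t^{\alpha_k-\alpha_1})J_t^{\alpha_1}$ and bounds the first factor in operator norm, whereas you expand $\mathcal{J}^m\mathbf 1$ directly over words $(j_1,\dots,j_m)$ --- and you are actually more careful than the paper about the $\Omega$-valuedness, which the paper silently assumes (and which is vacuous in the application where $\Omega=\mathbb{R}^{4m}$).
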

\begin{proof} Without loss of generality we can assume that $\{\alpha_j\}_{j=1}^n\subset(0,1]$ satisfies $ \alpha_1 \leq \alpha_2 \leq \cdots \leq \alpha_n $. We note that the operator $\mathcal{T}:C\left([0,T];\mathbb{R}^n\right) \to C\left([0,T];\mathbb{R}^n\right)$, given by
\begin{equation*}
\mathcal{T}(\varphi(t)) = \xi +
\left(
\begin{array}{c}
J^{\alpha_1}_t f_1(\varphi(t),t)\\
\vdots \\
J^{\alpha_n}_t f_n(\varphi(t),t)
\end{array}
\right), \textrm{ for all } t \in [0,T],
\end{equation*}
is well-defined. Our strategy in proving the theorem is classical, i.e.,  we show that for $m \in \mathbb{N}$, sufficiently large, the operator $\mathcal{T}^m$ is a contraction.

For  $\varphi, \, \psi \in C\left([0,T];\mathbb{R}^n\right)$ we have
\begin{equation*}
\left\| \mathcal{T} \varphi (t) - \mathcal{T} \psi (t) \right\|_{\mathbb{R}^n} \leq \ell\left(\sum_{k=1}^n  J^{\alpha_k}_t\right) \left\| \varphi(t) - \psi(t) \right\|_{\mathbb{R}^n},
\end{equation*}
for every $t\in[0,T]$. Then, by employing the semigroup property of the RL fractional integral we obtain
\begin{equation*}
\left\| \mathcal{T} \varphi (t) - \mathcal{T} \psi (t) \right\|_{\mathbb{R}^n} \leq \ell \left( \sum_{k=1}^n  J^{\alpha_k - \alpha_1}_t \right) J^{\alpha_1}_t  \left\| \varphi(t) - \psi(t) \right\|_{\mathbb{R}^n}, \textrm{ for all } t \in [0,T].
\end{equation*}
The aforementioned estimate enables us to conclude recursively that
\begin{equation*}
\left\| \mathcal{T}^m \varphi (t) - \mathcal{T}^m \psi (t) \right\|_{\mathbb{R}^n} \leq \ell^m \left( \sum_{k=1}^n  J^{\alpha_k-\alpha_1}_t \right)^m J^{m \alpha_1}_t  \left\| \varphi(t) - \psi(t) \right\|_{\mathbb{R}^n},
\end{equation*}
for all $t\in[0,T]$ and $m\in\mathbb{N}$. This inequality yields
\begin{multline}\label{Tn}
\left\| \mathcal{T}^m \varphi - \mathcal{T}^m \psi \right\|_{C([0,T];\mathbb{R}^n)} \\
\leq \ell^m \left\|\sum_{k=1}^n  J^{\alpha_k-\alpha_1}_t \right\|_{\mathcal{L}(C([0,T];\mathbb{R}^n))}^m \left\|J^{m \alpha_1}_t  \left\| \varphi(\cdot) - \psi(\cdot) \right\|_{\mathbb{R}^n}\right\|_{C([0,T];\mathbb{R})}, \textrm{ for all } m \in \mathbb{N} .
\end{multline}

On the other hand, using the properties of the RL fractional integral operator we find
\begin{equation}\label{novaestim1}
\left\Vert \sum_{k=1}^n J^{\alpha_k-\alpha_1}_t\right\Vert_{\mathcal{L}(C([0,T];\mathbb{R}^n))}
 \leq   \sum_{k=1}^n \left\Vert J^{\alpha_k-\alpha_1}_t \right\Vert_{\mathcal{L}(C([0,T];\mathbb{R}^n))}
 \leq  2\sum_{k=1}^{n} T^{\alpha_k-\alpha_1}.
\end{equation}
In order to obtain the above estimate, we have used that $\Gamma(\alpha) \geq {1}/{2}$ for $\alpha > 0.$

Now observe that for all $1 \leq k \leq n$ we have
\begin{equation*}
T^{(\alpha_k-\alpha_1)} \leq
\left \{
\begin{array}{cl}
1, & \textrm{if } 0 < T \leq 1,\\
T^{(\alpha_n-\alpha_1)}, & \textrm{if } T \geq 1,\\
\end{array}
\right.
\end{equation*}
since $ \alpha_1 \leq \alpha_2 \leq \cdots \leq \alpha_n $. From this and \eqref{novaestim1} we obtain the estimate
\begin{equation*}
\left\Vert \sum_{k=1}^n J^{\alpha_k-\alpha_1}_t \right\Vert_{\mathcal{L}(C([0,T];\mathbb{R}^n))} \leq T_M, \textrm{ where } T_M = 2n\max \left\{ T^{(\alpha_n-\alpha_1)} , 1 \right\}.
\end{equation*}

It follows from this inequality and (\ref{Tn}) that
\begin{equation}\label{id8}
\left\Vert \mathcal{T}^m \varphi - \mathcal{T}^m \psi \right\Vert_{C([0,T];\mathbb{R}^n)} \leq \frac{( \ell T_M T^{\alpha_1})^m}{\Gamma(m\alpha_1+1)} \left\Vert \varphi- \psi \right\Vert_{C([0,T];\mathbb{R}^n)}.
\end{equation}
Hence, for sufficiently large $m \in \mathbb{N}$, we have
\begin{equation}\label{id9}
\frac{( \ell \,T_M \,T^{\alpha_1})^m}{\Gamma(m\alpha_1+1)} < 1,
\end{equation}
since $\left[ {( \ell T_M T^{\alpha_1})^m}/{\Gamma(m\alpha_1+1)} \right] \to 0$ as $m \to \infty$, corresponding to the general term of the series defining the Mittag-Leffler function $E_{\alpha_1}(z)$ with $z=\ell\, T_M\, T^{\alpha_1}$.

Using \eqref{id8} and \eqref{id9} we obtain that $\mathcal{T}^m$ is a contraction in $C([0,T];\mathbb{R}^n)$, for such suitable $m$. Therefore, the Banach Fixed Point Theorem ensures the existence of a function $\phi \in C([0,T];\mathbb{R}^n)$ that is the unique fixed point of $\mathcal{T}^m$, and consequently, of $\mathcal{T}$. Thus, we have $\phi(t) = \mathcal{T} \phi(t)$ for every $t\in[0,T]$, which can be expressed as
\begin{equation*}
\phi_j(t)=\xi_j+\dfrac{1}{\Gamma(\alpha_j)}\int_{0}^t{(t-s)^{\alpha_j-1}f_j\big(\phi(s),s\big)} ds,  \textrm{ for all } t \in [0,T] \textrm{ and } 1\leq j \leq n.
\end{equation*}

Finally, Proposition \ref{integralequ} completes the proof.
\end{proof}


\subsection{A Brief Digression} \label{digression}

To conclude this section, we find important to highlight a particular case of Theorem \ref{teo2}. Specifically, we want to address the scenario where $f:\Omega\times[0,T] \subset \mathbb{R}^{n+1} \to \mathbb{R}^n$ is given by $f(x,t)=Ax$, with $A\in M^n(\mathbb{R})$ (the set of square matrices with real entries). In this context, a natural question arises regarding the representation of the matrix function corresponding to the unique solution $\phi(t)$ obtained for the Cauchy problem \eqref{caratheq}-\eqref{carathin} in $[0,\infty)$.

It is well-known in the theory (cf. \cite{Da-GeJa0}) that when $\alpha_1=\alpha_2=\ldots=\alpha_n=\alpha\in(0,1]$, the solution to \eqref{caratheq}-\eqref{carathin} can be represented by the Mittag-Leffler matrix function
\begin{equation}\label{auxsol1}\phi(t)=E_{\alpha}(t^\alpha A)\xi,\quad\forall t\geq0,\end{equation}
where $E_\alpha:\mathbb{C}\rightarrow\mathbb{C}$ is the analytic Mittag-Leffler function. It must be pointed out that the representation \eqref{auxsol1} is consistent with the classical case when $\alpha=1$, which corresponds to the exponential matrix.

We can understand \eqref{auxsol1} as a natural generalization of one dimensional case, where the Mittag-Leffler function ($\alpha\in(0,1)$) and the exponential function ($\alpha=1$) are solutions to their respective Cauchy problems \eqref{caratheq}-\eqref{carathin} in $[0,\infty)$.  However, when considering the Cauchy problem \eqref{caratheq}-\eqref{carathin} with distinct orders of differentiation, it is not valid to assume that they are direct generalizations of the one dimensional case, as such an assumption would not even make sense.

Therefore, to understand and derive the matrix function that satisfies \eqref{caratheq}-\eqref{carathin} with distinct orders of differentiation, it is imperative to comprehend the case $n=2$. With that in mind, let us proceed with this discussion by assuming that $A\in M^2(\mathbb{R})$ is in its Jordan normal form. In other words, we are considering just the matrices
$$A_1=\left[\begin{array}{cc}\lambda&0\\0&\mu\end{array}\right],\quad A_2=\left[\begin{array}{cc}\lambda&1\\0&\lambda\end{array}\right]\quad\textrm{and}\quad A_3=\left[\begin{array}{cc}\lambda&\mu\\-\mu&\lambda\end{array}\right],$$
where $\lambda,\mu\in\mathbb{R}$.

\textit{Case $A_1$:} In this scenario, it becomes apparent that the unique solution $\phi(t)$ to \eqref{caratheq}-\eqref{carathin} can be expressed by
$$\phi(t)=\left(\begin{array}{c}E_{\alpha_1}(t^{\alpha_1}\lambda)\xi_1\\E_{\alpha_2}(t^{\alpha_2}\mu)\xi_2\end{array}\right),$$
for every $t\in[0,\infty)$.\vspace*{0.2cm}

\textit{Case $A_2$:} By applying similar reasoning as before, we can deduce that $\phi_2(t)=E_{\alpha_2}(t^{\alpha_2}\lambda)\xi_2$, for every $t\in[0,\infty)$. Once we have determined $\phi_2(t)$, the first line of the problem becomes a non-homogeneous fractional ordinary differential equation of order $\alpha_1$. Therefore, the fractional version of the variation of constants formula allows us to obtain the expression
$$\phi(t)=\left(\begin{array}{c}E_{\alpha_1}(t^{\alpha_1}\lambda)\xi_1+\displaystyle\int_0^t(t-s)^{\alpha_1-1}E_{\alpha_1,\alpha_1}((t-s)^{\alpha_1}\lambda)E_{\alpha_2}(s^{\alpha_2}\lambda)\xi_2\,ds\\E_{\alpha_2}(t^{\alpha_2}\lambda)\xi_2\end{array}\right),$$
for every $t\in[0,\infty)$.\vspace*{0.2cm}

\textit{Case $A_3$:} This particular case poses the greatest challenge in our analysis. Let us consider the scenario where $\lambda=0$. In this situation, we can employ the equations from \eqref{caratheq}-\eqref{carathin} to demonstrate that $\phi(t)$ satisfies the integral equation
\begin{equation}\label{auxnewpb1}\phi(t)=\xi+\left(\begin{array}{c}\dfrac{t^{\alpha_1}\mu\xi_2}{\Gamma(\alpha_1+1)}\vspace*{0.3cm}
\\\dfrac{t^{\alpha_2}\mu\xi_1}{\Gamma(\alpha_2+1)}\end{array}\right)+J_t^{\alpha_1+\alpha_2}\phi(t),\end{equation}
for every $t\in[0,\infty)$. Computing the solution of the integral equation \eqref{auxnewpb1} is not a straightforward task. However, when $\alpha_1+\alpha_2\leq1$, we can deduce that the solution of \eqref{auxnewpb1} reduces to the solution of the non-homogeneous fractional ordinary differential equation
$$\capto^{\alpha_1+\alpha_2}\phi(t)=-\mu^2\phi(t)+\mu\left(\begin{array}{c}\dfrac{t^{-\alpha_2}\xi_2}{\Gamma(1-\alpha_2)}\vspace*{0.3cm}
\\\dfrac{t^{-\alpha_1}\xi_1}{\Gamma(1-\alpha_1)}\end{array}\right)\quad\textrm{and}\quad\phi(0)=\xi.$$
Using the fractional version of the variation of constants formula, we finally deduce that
\begin{multline*}\phi(t)=E_{\alpha_1+\alpha_2}(-t^{\alpha_1+\alpha_2}\mu^2)\xi\\+\mu\int_0^t(t-s)^{\alpha_1+\alpha_2-1}E_{\alpha_1+\alpha_2,\alpha_1+\alpha_2}(-(t-s)^{\alpha_1+\alpha_2}\mu^2)\left(\begin{array}{c}\dfrac{s^{-\alpha_2}\xi_2}{\Gamma(1-\alpha_2)}\vspace*{0.3cm}\\\dfrac{s^{-\alpha_1}\xi_1}{\Gamma(1-\alpha_1)}\end{array}\right)\,ds,\end{multline*}
for every $t\in[0,\infty)$.

The cases where $\alpha_1+\alpha_2>1$ or $\lambda\neq 0$ pose significant challenges and are omitted here. In this brief section, we simply want to highlight the challenge of obtaining a closed formula for the matrix solution of problem \eqref{caratheq}-\eqref{carathin} when $n=2$. This suggests that the general case of $n>2$ is even more complex and requires a more comprehensive investigation, which is currently lacking in the existing literature and will be conducted in our future research.

\section{Well-Posedness Theory} \label{existenceresults}

This section focuses on the initial boundary value problem \eqref{1prob}-\eqref{6prob}, the central subject of this paper. Here our main goal is to establish the well-posedeness of this problem.
\begin{theorem} \label{maintheorem}
Let $\alpha \in (0,1]$ be a real number and let $f,g,h \in C \left( \overline{\Gamma_1} \right)$ be such that $f$ and $h$ are positive functions and $g$ is a non-negative function. If $u_0 \in H^1_{\Gamma_0}(\Omega) \cap H^2(\Omega)$, $u_1 \in H^1_{\Gamma_0}(\Omega)$ and $\delta_0 \in L^2(\Gamma_1)$, then for all $T >0$ arbitrarily fixed there exists an unique pair of functions $(u,\delta)$, with $u: \Omega \times [0,T] \to \mathbb{R}$ and $\delta : \Gamma_{1}\times [0,T] \to \mathbb{R}$, in the class
\begin{equation}\label{id10}
\begin{array}{l}
 u, u_t \in L^\infty \left( 0,T; H^1_{\Gamma_0}(\Omega) \right), \ \capt u_{t} \in L^\infty \left( 0,T; L^2(\Omega) \right) \\
 \textrm{and }  u(t) \in \mathcal{H}_\Delta(\Omega), \text{ for a.e. } t \in  (0,T);
\end{array}
\end{equation}
\begin{equation}\label{id11}
\delta, \delta_t \in L^\infty \left( 0,T; L^2(\Gamma_1) \right)\textrm{ and } \delta_{tt} \in L^2 \left( 0,T; L^2(\Gamma_1) \right);
\end{equation}
which satisfies
\begin{eqnarray}
&&\capt u_t(t) - \Delta u (t) = 0, \textrm{ in } L^2(\Omega), \text{ for a.e. } t \in (0,T); \label{id12}\\
&&f\delta_{tt}(t) + g\delta_t (t) + h\delta (t) = - \gamma_0( u_t(t)) \textrm{ in } L^2(\Gamma_1), \textrm{ for a.e. } t \in (0,T);\label{id13}\\
&&\int_{\Gamma_1} \delta_t(t) \gamma_0(\varphi) d \Gamma_1 = \left\langle \gamma_1(u(t)) ,\gamma_0(\varphi) \right\rangle_{H^{-\frac{1}{2}}(\Gamma) \times H^{\frac{1}{2}}(\Gamma) },  \forall \varphi \in H^1_{\Gamma_0}(\Omega) \nonumber \\
&& \textrm{and a.e. } t \in (0,T); \label{id14}\\
&& u(0)=u_0 \textrm{ and } u_t(0)=u_1, \textrm{ in } L^2(\Omega); \label{id15}\\
&& \delta(0)=\delta_0  \textrm{ and } \delta_t(0) = \gamma_1(u_0)_{\mid_{\Gamma_{1}}}, \textrm{ in } L^2(\Gamma_1). \label{id16}
\end{eqnarray}
Moreover the pair of functions $(u,\delta)$ depends continuously on the initial data and the parameters $f,g$ and $h$.
\end{theorem}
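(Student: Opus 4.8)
The plan is to prove Theorem \ref{maintheorem} by the Faedo--Galerkin method, splitting the argument into the classical four steps: construction of approximate solutions, a priori estimates, passage to the limit, and verification of initial data plus uniqueness and continuous dependence.

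\textbf{Step 1: Approximate problems.} Since $H^1_{\Gamma_0}(\Omega)$ is separable, I would fix a basis $\{w_j\}_{j\in\mathbb{N}}$ of $H^1_{\Gamma_0}(\Omega)$, for instance the eigenfunctions of $-\Delta$ with mixed Dirichlet/Neumann conditions, and a basis $\{z_j\}_{j\in\mathbb{N}}$ of $L^2(\Gamma_1)$ adapted to the boundary trace, and set $V_m=\mathrm{span}\{w_1,\dots,w_m\}$, $W_m=\mathrm{span}\{z_1,\dots,z_m\}$. Seeking $u^m(t)=\sum_{j=1}^m a_j^m(t) w_j$ and $\delta^m(t)=\sum_{j=1}^m b_j^m(t) z_j$ satisfying the weak formulation of \eqref{id12}--\eqref{id14} tested against $w_k$ and $z_k$, and using $\eqref{id4}$ to write $\capt u_t^m = \capto^{\alpha+1} u^m$ (Proposition \ref{prop1}(v)), one obtains a coupled system of fractional ODEs for the coefficients; after inverting the (constant, invertible by positivity of $f$) mass matrices and introducing $c_j^m=(a_j^m)'$, it fits exactly the framework of \eqref{caratheq}--\eqref{carathin} with mixed orders $\alpha_j\in\{\alpha,1\}$. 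Theorem \ref{teo2} (the generalized Picard--Lindel\"of theorem) then gives a unique local, hence by the a priori bounds global, solution on $[0,T]$. Here the role of $\delta_t(0)=\gamma_1(u_0)_{\mid\Gamma_1}$ is crucial to get consistent initial data for the ODE system, which requires $u_0\in H^2(\Omega)$ so that $\gamma_1(u_0)\in L^2(\Gamma_1)$.

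\textbf{Step 2: A priori estimates.} I would test the approximate equations with $(u^m)'(t)=u_t^m(t)$ and $\delta_t^m(t)$ respectively, add, and use the generalized Green's formula to cancel the cross term $\langle\gamma_1(u^m),\gamma_0(u_t^m)\rangle$ against $(\delta_t^m,\gamma_0(u_t^m))_{\Gamma_1}$. The key tool is Theorem \ref{finalcaputo}: $\capt[u_t^m]^2\le 2(\capt u_t^m)u_t^m$, whose hypotheses are guaranteed by Proposition \ref{teoju2} applied componentwise (using that $u_t^m$ and $\capt u_t^m$ are continuous, being finite linear combinations solving the ODE system). This produces, after applying $J_t^1$ and using \eqref{id3}, an energy inequality controlling $|u_t^m(t)|^2$, $\|u^m(t)\|^2$, $\sqrt{f}\,|\delta_t^m(t)|_{\Gamma_1}^2$, $\sqrt{h}\,|\delta^m(t)|_{\Gamma_1}^2$ and the dissipative term $\int_0^t\sqrt{g}\,|\delta_t^m|_{\Gamma_1}^2$ by the data norms, uniformly in $m$ and $t\in[0,T]$; positivity of $f,h$ and nonnegativity of $g$ are exactly what is needed. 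A second, independent estimate is needed for $\capt u_t^m$ in $L^\infty(0,T;L^2(\Omega))$ and for $\delta_{tt}^m$ in $L^2(0,T;L^2(\Gamma_1))$: for the former I would differentiate the basis expansion, test \eqref{id12}-approximate with a suitable multiplier or use Proposition \ref{prop1}(vi) together with a bound on $u_{tt}^m$ obtained by testing the $t$-differentiated equation (legitimate at the Galerkin level); for the latter, solve \eqref{id13}-approximate algebraically for $\delta_{tt}^m$ and use the already-established bounds plus $f$ bounded below. Simultaneously one gets $\Delta u^m$ bounded in $L^2(\Omega)$ for a.e.\ $t$, giving $u(t)\in\mathcal{H}_\Delta(\Omega)$.

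\textbf{Step 3: Passage to the limit and initial data.} The uniform bounds give weak-$*$ limits $u,\delta$ in the spaces of \eqref{id10}--\eqref{id11} along a subsequence; linearity of the equations makes the limit passage in \eqref{id12}--\eqref{id14} routine, the only care being the identification of the trace terms and the fact that the fractional integral operators $J_t^\alpha,J_t^{1-\alpha}$ are bounded and commute with weak limits (using \cite{CarFe1}). To recover $u(0)=u_0$, $u_t(0)=u_1$ I would invoke Proposition \ref{teoderivcont} applied to $u-u_0-tu_1$ (or an appropriate combination), after checking $D_t^\alpha$ of the relevant quantity is in $L^\infty(0,T;L^2(\Omega))$; the boundary initial conditions $\delta(0)=\delta_0$, $\delta_t(0)=\gamma_1(u_0)$ follow from $\delta,\delta_t\in C([0,T];L^2(\Gamma_1))$ coming from $\delta_{tt}\in L^2$. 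Uniqueness follows by taking the difference of two solutions (zero data), running the Step 2 energy estimate on the difference itself (now legitimate on the limit solutions because the regularity class \eqref{id10}--\eqref{id11} plus Proposition \ref{teoju2} supply the hypotheses of Theorem \ref{finalcaputo}), and concluding via the fractional Gr\"onwall inequality that the difference vanishes; continuous dependence on $u_0,u_1,\delta_0$ and on $f,g,h$ follows from the same energy inequality, keeping the data-dependent terms explicit and estimating the differences of the coefficient functions in $C(\overline{\Gamma_1})$.

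\textbf{Main obstacle.} I expect the principal difficulty to be Step 2, specifically making the formal multiplier computations rigorous in the fractional setting: the cancellation of the cross term and the use of $\capt[u_t^m]^2\le 2(\capt u_t^m)u_t^m$ require verifying, at each finite Galerkin level, the continuity and Sobolev-regularity hypotheses of Theorem \ref{finalcaputo} and Proposition \ref{teoju2} for the scalar functions built from the ODE solution --- and then obtaining the second-order bound on $\capt u_t^m$ (equivalently $\capto^{\alpha+1}u^m$) and on $\delta_{tt}^m$, for which the naive approach of differentiating the equation in $t$ must be handled with care because the Caputo derivative does not commute freely with $d/dt$ unless the compatibility condition $\delta_t(0)=\gamma_1(u_0)$ holds; this is precisely why that condition is imposed in \eqref{6prob} and \eqref{id16}, and why $u_0\in H^2(\Omega)$ is required.
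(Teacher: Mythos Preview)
Your overall strategy coincides with the paper's: Faedo--Galerkin approximation, the mixed-order fractional ODE system solved via Theorem~\ref{teo2}, first energy estimate using Theorem~\ref{finalcaputo} with Proposition~\ref{teoju2}, weak-$*$ passage to the limit with Proposition~\ref{teoderivcont} to recover $u_t(0)=u_1$, and the same energy computation on differences for uniqueness and continuous dependence (no Gr\"onwall is actually needed there---with zero data the inequality gives zero directly).

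The one place where your outline is not merely vague but potentially circular is the second a priori estimate. Solving the boundary equation algebraically for $\delta_{tt}^m$ requires control of $\gamma_0(u_t^m)$, hence of $u_t^m$ in $H^1_{\Gamma_0}(\Omega)$, and that is \emph{not} delivered by Estimate~1: in the fractional setting, after applying $J_t^1$ and \eqref{id3} to $\capt|u_t^m|^2$ you obtain only a bound on $J_t^{1-\alpha}|u_t^m|^2$ (whence on $\capt u_m$ via \eqref{id7}), not on $|u_t^m|$ and certainly not on $\|u_t^m\|$. The paper's second estimate is a single coupled computation: apply $d/dt$ to the first approximate equation and $\capt$ to the second, then test with $w=2\,\capt u_t^m$ and $z=2\,\delta_{tt}^m$. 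With this choice the boundary cross terms $-(\delta_{tt}^m,\gamma_0(\capt u_t^m))_{\Gamma_1}$ and $(\gamma_0(\capt u_t^m),\delta_{tt}^m)_{\Gamma_1}$ cancel, and one obtains simultaneously $\capt u_t^m\in L^\infty(0,T;L^2(\Omega))$, $u_t^m\in L^2(0,T;H^1_{\Gamma_0}(\Omega))$ and $\delta_{tt}^m\in L^2(0,T;L^2(\Gamma_1))$. The constants at $t=0$, namely $|\capt u_t^m(0)|$ and $|\delta_{tt}^m(0)|_{\Gamma_1}$, are bounded by evaluating the approximate equations at $t=0$; the compatibility $\delta_t^m(0)=\gamma_1(u_{0m})|_{\Gamma_1}$ is used exactly here to convert $((u^m(0),\capt u_t^m(0)))-(\delta_t^m(0),\gamma_0(\capt u_t^m(0)))_{\Gamma_1}$ into $(\Delta u_{0m},\capt u_t^m(0))$ via Green's formula, which is why $u_0\in H^2(\Omega)$ enters. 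Your intuition that this is where the difficulty lies is correct, but the mechanism is this specific pairing of derivatives and multipliers rather than a sequential or algebraic argument.
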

\begin{proof} We have divided the proof into 5 steps: (1) Approximate problem; (2) A priori estimates; (3) Passage to the limit - Existence; (4) Uniqueness; and (5) Continuous dependence.\vspace*{0.3cm}

\noindent \textbf{Step 1: Approximate problem.}
Let $(w_j)_{j \in \mathbb{N}}$ be an orthonormal basis of $L^2(\Omega)$ given by the eigenfunctions of the operator $-\Delta$ with domain $ H^1_{\Gamma_0}(\Omega) \cap H^2(\Omega)$. Thus $(w_j)_{j \in \mathbb{N}}$ is also a complete orthogonal  system in $H_{\Gamma_0}^1(\Omega)$ and $H^1_{\Gamma_0}(\Omega) \cap H^2(\Omega)$. For each $m \in \mathbb{N}$ we set $W_m =[w_1, \cdots , w_m]$ the linear subspace of $H^1_{\Gamma_0}(\Omega) \cap H^2(\Omega)$ spanned by  $\{w_{1}, \cdots, w_{m}\}$. Similarly, let $(z_j)_{j \in \mathbb{N}}$ be an orthonormal basis of the Hilbert space  $L^2(\Gamma_1)$, constructed such that $\gamma_0(W_m)|_{\Gamma_1} \subset Z_m$ for every $m \in \mathbb{N}$, where $Z_m = [z_{1}, \cdots , z_{m}]$ represents the linear subspace of $L^2(\Gamma_1)$ spanned by  $\{z_{1}, \cdots, z_{m}\}$.

Now, for each $m \in \mathbb{N}$ fixed arbitrary, we seek functions $u_{m}: \Omega \times [0,T] \to \mathbb{R}$ and $\delta_{m}:\Gamma_{1} \times [0,T] \to \mathbb{R}$ in the form
\begin{equation*}
\displaystyle u_m(x,t) = \sum_{k=1}^m a_{km}(t)\, w_k(x) \quad\textrm{and}\quad \delta_m(x,t) = \sum_{k=1}^m b_{km}(t)\,z_k(x)
\end{equation*}
which are solutions to the approximate problem
\begin{align}
& \label{help1}\left( \capt u'_m (t) , w_j \right) + \left( \left( u_m(t),w_j \right) \right) - \left(\delta'_m(t), \gamma_0(w_j) \right)_{\Gamma_1}  = 0, && 1 \leq j \leq m\,;\\
& \label{help2}\left(f \delta''_m (t) + g\delta_m'(t) + h\delta_m(t) , z_j \right)_{\Gamma_1}  + \left( \gamma_0(u'_m(t)),z_j\right)_{\Gamma_1} = 0,  && 1 \leq j \leq m\,; \\
& u_{m}(0) = u_{0m}, \ u'_m(0) = u_{1m}, \ \delta_m(0) = \delta_{0m}, \ \delta'_m(0)= \delta_{1m}, \label{help3}
\end{align}
where
\begin{eqnarray}
&&u_{0m} = \sum_{k=1}^m (u_0,w_k)w_k \to u_{0} \,\, \text{in } H^1_{\Gamma_0}(\Omega) \cap H^2(\Omega), \label{help4}\\
&&u_{1m} = \sum_{k=1}^m (u_1,w_k)w_k \to u_{1}\,\, \text{in } H^1_{\Gamma_0}(\Omega) \label{help5}\\
&&\delta_{0m} = \sum_{k=1}^m (\delta_0,z_k)_{\Gamma_1}z_k \to \delta_{0} \text{ in } L^{2}(\Gamma_{1}) \label{help6}\\
&&\delta_{1m} ={\gamma_1(u_{0m})}_{\mid_{\Gamma_{1}}} =\sum_{k=1}^m c_{km}z_k \to {\gamma_{1}(u_{0})}_{\mid_{\Gamma_{1}}} \text{ in } L^{2}(\Gamma_{1}) , \label{help7}
\end{eqnarray}
for some $c_{km} \in \mathbb{R}$.

By setting $y_{jm}(t)=a'_{jm}(t)$ and $v_{jm}(t)=b'_{jm}(t)$ for each $1 \leq j \leq m$, and reinterpreting the variational identities \eqref{help1}-\eqref{help2} and the initial conditions \eqref{help3}, we obtain an equivalent linear fractional ODE system in $\mathbb{R}^{4m}$:
\begin{equation}\label{caratheq21}
\left\{
\begin{array}{ccl}
\capt y_{m}(t) &=& -A_{1,m} \, a_m(t) + A_{2,m} \, v_m(t),\\
 a'_{m}(t) &=& y_m(t),\\
 v'_{m}(t) &=& -(A_{3,m})^{-1}\big[A_{4,m} \, y_m(t) + A_{5,m} \, v_m(t) + A_{6,m} \, b_m(t)\big],\\
 b'_{m}(t) &=& v_m(t),
\end{array}
\right.
\end{equation}
with the initial conditions
\begin{equation}\label{caratheq22}
\left\{
\begin{array}{ccl}
y_{m}(0) &=& u_{1m}, \\
a_{m}(0) &=& u_{0m}, \\
v_{m}(0) &=& \delta_{1m}, \\
b_{m}(0) &=& \delta_{0m},
\end{array}
\right.
\end{equation}
where
\begin{equation*}
\begin{array}{ll}
y_m(t) = \left(y_{1m}(t), \cdots , y_{mm} (t) \right), & a_m(t) = \left(a_{1m}(t), \cdots , a_{mm}(t) \right), \\
v_m(t) = \left(v_{1m}(t), \cdots , v_{mm}(t) \right), & b_m(t) = \left(b_{1m}(t), \cdots , b_{mm}(t) \right),
\end{array}
\end{equation*}
and
\begin{equation*}
\begin{array}{lll}
 A_{1,m}=\big[((w_j,w_i))\big]_{i,j=1}^m, & A_{2,m}=\big[(z_j,\gamma_0(w_i))_{\Gamma_1}\big]_{i,j=1}^m, & A_{3,m}=\big[(fz_j,z_i)_{\Gamma_1}\big]_{i,j=1}^m, \\
 A_{4,m}=\big[(\gamma_0(w_j),z_i)_{\Gamma_1}\big]_{i,j=1}^m, &  A_{5,m}=\big[(gz_j,z_i)_{\Gamma_1}\big]_{i,j=1}^m, & A_{6,m}=\big[(hz_j,z_i)_{\Gamma_1}\big]_{i,j=1}^m.
\end{array}
\end{equation*}
We emphasize that the continuity and positivity of $f$ ensure $(fz_j,z_i)_{\Gamma_1}>0$ for all $i,j\in\{1,\ldots,m\}$. This, combined with the orthonormality of $(z_j)_{j\in\mathbb{N}}$ in $L^2(\Gamma_1)$, guarantees the invertibility of the matrix $A_{3,m}$.

It is not difficult to notice that the Cauchy problem \eqref{caratheq21}-\eqref{caratheq22} satisfies the assumption of Theorem \ref{teo2}.  Therefore we can deduce the existence and uniqueness of solution for \eqref{caratheq21}-\eqref{caratheq22}. The aforementioned conclusions establish that $y_m$, $\capt y_{m}$, $a^\prime_{m}$, $v^\prime_{m}$, $\capt v^\prime_{m}$ and $b^\prime_{m}$  belong to $C([0,T]; \mathbb{R}^m)$, and therefore
\begin{eqnarray}
&& \nonumber u'_m, \capt u'_m \in C^1 \left([0,T]; H^1_{\Gamma_0}(\Omega) \cap H^2(\Omega) \right)\label{eqap1}, \\
&& \nonumber  \capt\delta_m'' \in C \left([0,T]; L^2(\Gamma_1) \right) \textrm{ and } \delta_m \in C^2 \left([0,T]; L^2(\Gamma_1) \right),\label{eqap11} \\
&&\left( \capt u'_m(t), w \right) + \left( \left( u_m(t),w \right) \right) - \left( \delta'_m(t),\gamma_0 (w) \right)_{\Gamma_1} = 0, \textrm{ for all } w \in W_{m}, \label{eqap2}\\
&&\left(f \delta''_m(t) + g \delta'_m(t) + h \delta_m (t) , z \right)_{\Gamma_1}  + \left( \gamma_0 (u'_m(t)) , z \right)_{\Gamma_1} = 0, \textrm{ for all } z \in Z_{m}, \label{eqap3}
\end{eqnarray}
for each $m\in\mathbb{N}$.\vspace*{0.3cm}

\noindent \textbf{Step 2: A priori estimates. Estimate 1.} Taking $w = 2 u'_m \in W_m$ in \eqref{eqap2} and $z = 2\delta'_m \in Z_m$ in \eqref{eqap3} we deduce
\begin{multline*}
2\left( \capt u'_m(t), u'_m(t) \right) + 2\left( \left( u_m(t), u'_m(t) \right) \right) +2\left(f \delta''_m(t) , \delta'_m(t) \right)_{\Gamma_1}\\
+ 2\left\vert g^{\frac{1}{2}} \delta'_m(t)\right\vert^2_{\Gamma_1} + 2\left(h \delta_m (t),\delta'_m(t) \right)_{\Gamma_1} = 0.
\end{multline*}

To address the first term on the left side of the above identity, we observe that
$$\left( \capt u'_m(t), u'_m(t) \right)=\sum_{j=1}^m\left[\capt a_{jm}'(t)\right]a_{jm}'(t)$$
and
$$\capt \left\vert  u'_m(t) \right\vert^2=\sum_{j=1}^m\capt\left[a_{jm}'(t)\right]^2,$$
since $(w_j)_{j\in\mathbb{N}}$ is an orthonormal basis in $L^2(\Omega)$. Then, supported by Proposition \ref{teoju2}, we apply Theorem \ref{finalcaputo} to establish the inequality
\begin{equation*}
\capt \left\vert  u'_m(t) \right\vert^2 + 2\left\vert g^{\frac{1}{2}} \delta'_m(t)\right\vert_{\Gamma_1}^2 +  \frac{d}{dt} \left[ \left\Vert u_m(t) \right\Vert^2+\left\vert f^{\frac{1}{2}}\delta'_m(t) \right\vert_{\Gamma_1}^2+ \left\vert h^{\frac{1}{2}} \delta_m(t)\right\vert_{\Gamma_1}^2 \right] \leq 0.
\end{equation*}

Taking into account \eqref{id3}, the convergences \eqref{help4}-\eqref{help7} and the continuity of the trace map $\gamma_{1}$  (i.e. $|\gamma_{1}(u)|_{\Gamma_{1}} \leq c_{1} \left\Vert u \right\Vert_{H^1_{\Gamma_0}(\Omega) \cap H^2(\Omega)}$), when applying the integral operator $J^1_t$ to both sides of the last inequality we get
\begin{eqnarray}
&&J^{1-\alpha}_t \left\vert  u'_m(t) \right\vert^2  + \left\Vert u_m(t) \right\Vert^2 + f_0 \left\vert \delta'_m(t) \right\vert_{\Gamma_1}^2 \nonumber\\
&&\leq \frac{T^{1-\alpha}}{\Gamma(2-\alpha)} \left\vert  u'_{m}(0) \right\vert^2 + \left\Vert u_m(0) \right\Vert^2 + f_1\left\vert \delta'_m(0) \right\vert_{\Gamma_1}^2+ h_1 \left\vert \delta_m(0)\right\vert_{\Gamma_1}^2 \nonumber\\
&&=\frac{T^{1-\alpha}}{\Gamma(2-\alpha)} \left\vert  u_{1m} \right\vert^2 + \left\Vert u_{0m} \right\Vert^2 + f_1 \left\vert \gamma_1(u_{0m}) \right\vert_{\Gamma_1}^2+ h_1 \left\vert \delta_{0m}\right\vert_{\Gamma_1}^2\nonumber\\
&&\leq \frac{T^{1-\alpha}}{\Gamma(2-\alpha)} \left\Vert  u_{1m} \right\Vert^2 + (1+f_1 c_1^2)\left\Vert u_{0m} \right\Vert^2_{H^1_{\Gamma_0}(\Omega) \cap H^2(\Omega)} + h_1 \left\vert \delta_{0m} \right\vert^2_{\Gamma_1} \nonumber\\
&&\leq  \frac{T^{1-\alpha}}{\Gamma(2-\alpha)} \left\Vert  u_{1} \right\Vert^2 + (1+f_1 c_1^2)\left\Vert u_{0} \right\Vert^2_{H^1_{\Gamma_0}(\Omega) \cap H^2(\Omega)} + h_1 \left\vert \delta_{0} \right\vert^2_{\Gamma_1},\label{id17}
\end{eqnarray}
where
$$
f_{0} = \min_{x \in \overline{\Gamma_{1}}} f(x), \, f_{1} = \max_{x \in \overline{\Gamma_{1}}} f(x), \textrm{ and } h_{1} = \max_{x \in \overline{\Gamma_{1}}} h(x).
$$

Finally, from \eqref{id7} and \eqref{id17} we can see that there exists a constant $K_{1} > 0$, that depends on  $\alpha, T, f_0, f_1, h_1$ and $c_1$, such that
\begin{multline} \label{aprioriI}
\left\Vert \capt u_m \right\Vert^2_{L^\infty(0,T; L^2(\Omega))}
+ \left\Vert u_m \right\Vert^2_{L^\infty \left(0,T;H^1_{\Gamma_0}(\Omega) \right)}
+ \left\Vert \delta'_m \right\Vert^2_{L^\infty(0,T;L^2(\Gamma_1))}\\
\leq K_1 \left[  \left\Vert u_{0} \right\Vert^2_{H^1_{\Gamma_0}(\Omega) \cap H^2(\Omega)} + \left\Vert u_1 \right\Vert^2 + \left\vert \delta_{0} \right\vert^2_{\Gamma_1} \right],
\end{multline}
which completes estimate 1. Additionally, we can estimate $\delta_m$ in $L^\infty(0,T;L^2(\Gamma_1))$ based on the estimate of $\Vert \delta'_m \Vert_{L^\infty(0,T;L^2(\Gamma_1))}$. In fact, the Fundamental Theorem of Calculus implies
\begin{eqnarray*} \label{aprioriIa}
\nonumber \Vert \delta_m \Vert_{L^\infty(0,T;L^2(\Gamma_1))} & = & \left\Vert J^1_t\left[D^1_t \delta_m\right] + \delta_m(0) \right\Vert_{L^\infty(0,T;L^2(\Gamma_1))} \\
\nonumber & \leq & \Vert J^1_t \delta'_m \Vert_{L^\infty(0,T;L^2(\Gamma_1))} + \Vert \delta_m(0) \Vert_{L^\infty(0,T;L^2(\Gamma_1))} \\
\nonumber & \leq & \frac{T}{\Gamma(2)} \Vert \delta'_m \Vert_{L^\infty(0,T;L^2(\Gamma_1))} + \vert \delta_m(0) \vert_{\Gamma_1} \\
& \leq &  T \Vert \delta'_m \Vert_{L^\infty(0,T;L^2(\Gamma_1))} + \vert \delta_0 \vert_{\Gamma_1}.
\end{eqnarray*}
therefore, \eqref{aprioriI} gives us the desired estimate.\vspace*{0.3cm}

\noindent \textbf{Estimate 2.} We apply $D^1_t=\frac{d}{dt}$ in (\ref{eqap2}) and $\capt$ in (\ref{eqap3}). In the resulting equations we take $w=2\capt u_m' \in W_m$ and $z=2 \delta''_m \in Z_m$, and combine them to obtain
\begin{multline}
2\left( \left[ \capt u'_m(t) \right]', \capt u'_m(t) \right) + 2\left( \left( u'_m(t) , \capt u'_m(t) \right) \right) + 2\left(f \,\capt \delta''_m(t) , \delta''_m(t) \right)_{\Gamma_1}\\
+ 2\left( g \,\capt \delta'_m(t),\delta''_m(t) \right)_{\Gamma_1} + 2\left(h \,\capt \delta_m(t),\delta''_m(t) \right)_{\Gamma_1} = 0. \label{id18}
\end{multline}

Since $\delta'_m \in C^1([0,T];L^2(\Gamma_1))$, making use of \eqref{id1} and \eqref{id4} we can write
\begin{equation*}
\left(g\, \capt \delta'_m(t), \delta''_m(t) \right)_{\Gamma_1} = \left( J^{1-\alpha}_t g^{\frac{1}{2}} \delta''_m(t), \capto^{1-\alpha} J^{1-\alpha}_t g^{\frac{1}{2}} \delta''_m(t) \right)_{\Gamma_1}.
\end{equation*}

As done before in  Estimate 1, considering Proposition \ref{teoju2} and Theorem \ref{finalcaputo}, from \eqref{id18}, the equality above, and Young's inequality, it follows that for any $\varepsilon > 0$,
\begin{eqnarray*}
&& \frac{d}{dt} \left\vert \capt u'_m(t) \right\vert^2 + \capt \left\Vert u'_m(t) \right\Vert^2
+ \capt \left\vert f^{\frac{1}{2}} \delta''_m(t) \right\vert^2_{\Gamma_1}
+ \capto^{1-\alpha} \left\vert J^{1-\alpha}_t g^{\frac{1}{2}} \delta''_m(t)\right\vert_{\Gamma_1}^2 \\
&& \leq  2 \left\vert h\, \capt \delta_m(t) \right\vert_{\Gamma_1} \left\vert \delta''_m(t) \right\vert_{\Gamma_1} \leq  \frac{1}{\varepsilon}\left\vert h \,\capt \delta_m(t) \right\vert_{\Gamma_1}^2 + \varepsilon \left\vert \delta''_m(t) \right\vert_{\Gamma_1}^2 \\
&& \leq \frac{h_1\, T^{1-\alpha}}{\varepsilon \Gamma(2-\alpha)} J^{1-\alpha}_t \vert \delta'_m(t) \vert^2_{\Gamma_1} + \varepsilon \,\left\vert \delta''_m(t) \right\vert_{\Gamma_1}^2 ,
\end{eqnarray*}
where in the last inequality we have used \eqref{id7}.

Integrating this inequality over $(0,t)$ and applying \eqref{id3} it follows that
\begin{eqnarray} \label{priori21}
\nonumber &&  \left\vert  \capt u'_m(t) \right\vert^2 +  J^{1-\alpha}_t \left\Vert u'_m(t) \right\Vert^2  + J^{1-\alpha}_t \left\vert f^{\frac{1}{2}} \delta''_m(t) \right\vert^2_{\Gamma_1} + J^{\alpha}_t \left\vert J^{1-\alpha}_t g^{\frac{1}{2}} \delta''_m(t) \right\vert^2_{\Gamma_1} \\
\nonumber && \leq \frac{h_1 T^{1-\alpha}}{\varepsilon \Gamma(2-\alpha)} J^{2-\alpha}_t \vert \delta'_m(t) \vert^2_{\Gamma_1} + \varepsilon J^1_t \left\vert \delta''_m(t) \right\vert_{\Gamma_1}^2 + \left\vert \capt u'_m(0) \right\vert^2 + \frac{T^{1-\alpha}}{\Gamma(2-\alpha)} \Vert u_m'(0) \Vert^2 \\
&& + \frac{T^{1-\alpha}}{\Gamma(2-\alpha)} \left\vert f^{\frac{1}{2}} \delta_m''(0) \right\vert^2_{\Gamma_1} + \frac{T^\alpha}{\Gamma(1+\alpha)} \left\vert \left( J^{1-\alpha}_t g^{\frac{1}{2}} \delta''_m \right)(0) \right\vert^2_{\Gamma_1}.
\end{eqnarray}

Our job now is to estimate the terms, in the right hand side of the above inequality, involving $\capt u'_m(0)$, $\delta''_m (0)$ and $\left(J^{1-\alpha}_t \delta''_m \right)(0)$. We start noting that the regularity of $\delta_m''$ allows us to affirm that $\left(J^{1-\alpha}_t \delta''_m \right)(0)=0$. Furthermore, the approximated equation \eqref{eqap2} in $t=0$ with $w=\capt u_m'(0)$ gives us
\begin{eqnarray*}
\left\vert \capt u_m'(0) \right\vert^2
 &=& - \left( \left( u_m (0) , \capt u_m'(0) \right) \right) + \left(\delta'_m(t), \gamma_0(\capt u_m'(0)) \right)_{\Gamma_1}\\
&=& \left( \Delta u_m(0) , \capt u'_m(0) \right) \leq \frac{1}{2} \left\vert \Delta u_{0m} \right\vert^2 + \frac{1}{2} \left\vert \capt u_m'(0) \right\vert^2,
\end{eqnarray*}
what gives us the estimate
\begin{equation} \label{priori21a}
\left\vert \capt u_m'(0) \right\vert^2 \leq \left\vert \Delta u_{0m} \right\vert^2 \leq \left\Vert u_0 \right\Vert^2_{H^1_{\Gamma_0}(\Omega) \cap H^2(\Omega)}.
\end{equation}

Taking account the continuity of {$\gamma_{0}$ (i.e. $|\gamma_{0}(u)|_{\Gamma_{1}} \leq c_{0} \left\Vert u \right\Vert_{H^1_{\Gamma_0}(\Omega)} $)} and $\gamma_1$, a similar approach on equation \eqref{eqap3} with $z = f \delta''_m(0)$ allows us deduce that
\begin{eqnarray*}
\left\vert f \delta''_m(0) \right\vert^2_{\Gamma_1}
 &=&  \left( f \delta''_m(0) ,- g \delta'_m(0) -h \delta_m(0) - \gamma_0 ( u'_m(0))\right)_{\Gamma_1} \\
&\leq&  \left\vert f \delta''_m(0) \right\vert_{\Gamma_1} \left\vert - g \gamma_1 (u_{0m}) -h \delta_{0m} - \gamma_0(u_{1m}) \right\vert_{\Gamma_1} \\
&\leq& {\frac{1}{2} \left\vert f \delta''_m(0)\right\vert^2_{\Gamma_1} + (g_1c_1)^2\left\Vert u_{0m} \right\Vert^2 +  2h_1^2\left\vert  \delta_{0m} \right\vert^2_{\Gamma_1} +  2 c_0^2 \left\Vert u_{1m} \right\Vert^2}.
\end{eqnarray*}
Therefore we conclude
\begin{equation}\label{priori22}
\left\vert \delta''_m(0) \right\vert^2_{\Gamma_1} \leq \frac{2(g_1c_1)^2}{f_0} \left\Vert u_{0} \right\Vert^2_{H^1_{\Gamma_0}(\Omega) \cap H^2(\Omega)} +  \frac{4c_0^2}{f_0} \left\Vert u_{1} \right\Vert^2 +  \frac{4h_1^2}{f_0}\left\vert  \delta_{0} \right\vert^2_{\Gamma_1} .
\end{equation}

Thus, with \eqref{priori21a} and \eqref{priori22} in \eqref{priori21}, inequality \eqref{prop07} guarantees that
\begin{multline*}
\left\vert  \capt u'_m(t) \right\vert^2 +  \frac{1}{T^\alpha \Gamma(1-\alpha)} \left[ J^{1}_t \left\Vert u'_m(t) \right\Vert^2  + f_0 J^{1}_t \left\vert \delta''_m(t) \right\vert^2_{\Gamma_1} \right] + \frac{g_0}{T^{1-\alpha} \Gamma(\alpha)} J^{1}_t \left\vert J^{1-\alpha}_t \delta''_m(t) \right\vert^2_{\Gamma_1} \\
\leq \frac{h_1}{\varepsilon } \left[ \frac{T^{1-\alpha}}{\Gamma(2-\alpha)} \right]^2 J^{1}_t \vert \delta'_m(t) \vert^2_{\Gamma_1} + \varepsilon J^1_t \left\vert \delta''_m(t) \right\vert_{\Gamma_1}^2 + C_{2} \left[ \left\Vert u_{0} \right\Vert^2_{H^1_{\Gamma_0}(\Omega) \cap H^2(\Omega)} + \left\Vert u_{1} \right\Vert^2 + \left\vert  \delta_{0} \right\vert^2_{\Gamma_1} \right],
\end{multline*}
where $C_2>0$ is a constant.

Choosing $\varepsilon$ sufficiently small, such that $0<\varepsilon < f_0/T^\alpha \Gamma(1-\alpha)$, for some $C_3>0$, we get
\begin{multline*}
\left\vert  \capt u'_m(t) \right\vert^2 + J^{1}_t \left\Vert u'_m(t) \right\Vert^2  + J^{1}_t \left\vert \delta''_m(t) \right\vert^2_{\Gamma_1} \\
\leq C_3 \left[ \left\Vert \delta'_m \right\Vert^2_{L^2(0,T;L^2(\Gamma_1))} +  \left\Vert u_{0} \right\Vert^2_{H^1_{\Gamma_0}(\Omega) \cap H^2(\Omega)} + \left\Vert u_{1} \right\Vert^2 + \left\vert  \delta_{0} \right\vert^2_{\Gamma_1} \right].
\end{multline*}

From this inequality and \eqref{aprioriI} there exists a constant $K_{2}>0$, that depends on  $\alpha,T,f_0,g_1,h_1,c_0$ and $c_1$, such that
\begin{multline} \label{aprioriII}
 \left\Vert  \capt u'_m \right\Vert^2_{L^\infty(0,T;L^2(\Omega))} + \left\Vert u'_m \right\Vert^2_{L^2(0,T;H^1_{\Gamma_0}(\Omega))}  + \left\Vert \delta''_m \right\Vert^2_{L^2(0,T;L^2(\Gamma_1))} \\
 \leq K_2 \left[ \left\Vert u_{0} \right\Vert^2_{H^1_{\Gamma_0}(\Omega) \cap H^2(\Omega)} + \left\Vert u_{1} \right\Vert^2 + \left\vert  \delta_{0} \right\vert^2_{\Gamma_1} \right].
\end{multline}

Moreover, since there exists $C_4>0$ such that $\vert u \vert \leq C_4 \Vert u \Vert$, using \eqref{id2} and the continuity of the RL fractional integral of order $\alpha$ (cf. \cite[Theorem 3.1]{CarFe1}) we obtain
\begin{eqnarray} \label{aprioriIIa}
\nonumber \Vert u'_m \Vert_{L^\infty(0,T;L^2(\Omega))} & = & \Vert J^\alpha_t\, \capt u'_m + u'_m(0) \Vert_{L^\infty(0,T;L^2(\Omega))} \\
\nonumber & \leq & \Vert J^\alpha_t\, \capt u'_m \Vert_{L^\infty(0,T;L^2(\Omega))} + \Vert u'_m(0) \Vert_{L^\infty(0,T;L^2(\Omega))} \\
\nonumber & \leq & \frac{T^\alpha}{\Gamma(\alpha+1)} \Vert \capt u'_m \Vert_{L^\infty(0,T;L^2(\Omega))} + \vert u'_m(0) \vert \\
& \leq &  \frac{T^\alpha}{\Gamma(\alpha+1)} \Vert \capt u'_m \Vert_{L^\infty(0,T;L^2(\Omega))} + C_4\Vert u_1 \Vert.
\end{eqnarray}
In this manner, \eqref{aprioriII} enables us to estimate $u'_m$ in $L^\infty\left(0,T; L^2(\Omega) \right)$, finishing the a priori estimates. \vspace*{0.3cm}

\noindent \textbf{Step 3: Passage to the limit - Existence.}
The a priori estimates, give us 
\begin{eqnarray*}
&&\left( u_m \right)_{m \in \mathbb{N}} \textrm{ is bounded in } L^\infty \left(0,T;H^1_{\Gamma_0}(\Omega) \right); \\
&&\left( u'_m \right)_{m \in \mathbb{N}} \textrm{ is bounded in } L^\infty \left(0,T;L^{2}(\Omega) \right) \textrm{ and in } L^2 \left(0,T;H^1_{\Gamma_0}(\Omega) \right); \\
&&\left( \capt u'_m \right)_{m \in \mathbb{N}} \textrm{ is bounded in } L^\infty \left(0,T;L^{2}(\Omega) \right);\\
&&\left( \delta_m \right)_{m \in \mathbb{N}} \textrm{ and } \left( \delta'_m \right)_{m \in \mathbb{N}} \textrm{ are bounded in } L^\infty \left(0,T;L^2(\Gamma_1) \right);\\
&&\left( \delta''_m \right)_{m \in \mathbb{N}} \textrm{ is bounded in }L^2 \left(0,T;L^2(\Gamma_1) \right).
\end{eqnarray*}

Therefore, with a standard procedure, we can find subsequences  of $ \left( u_m \right)_{m \in \mathbb{N}} $ and  $\left( \delta_m \right)_{m \in \mathbb{N}}$, for which we still use the same notations, and functions $u$ and $\delta$ such that
\begin{eqnarray}
&&u_m \overset{\star}{\rightharpoonup} u \textrm{ in } L^\infty \left(0,T;H^1_{\Gamma_0}(\Omega) \right), \label{conv1} \\
&&u'_m \overset{\star}{\rightharpoonup} u' \textrm{ in } L^\infty \left(0,T; L^{2}(\Omega) \right) \textrm{ and } u'_m \rightharpoonup u' \textrm{ in } L^2 \left(0,T;H^1_{\Gamma_0}(\Omega) \right), \label{conv2} \\
&&\delta_m \overset{\star}{\rightharpoonup} \delta \, \, \mbox{and} \,\, \delta'_m \overset{\star}{\rightharpoonup} \delta'\,\, \text{ in } L^\infty \left(0,T;L^2(\Gamma_1) \right), \label{conv3}\\
&&\delta''_m \rightharpoonup \delta'' \,\, \mbox{in}\,\, L^{2}(0,T;L^{2}(\Gamma_{1}))\,.\label{conv4}
\end{eqnarray}
\noindent Even more, by the continuity of the application $\gamma_0:H^1_{\Gamma_0}(\Omega) \to L^2(\Gamma_1)$ we have
\begin{equation} \label{fraca7}
 \gamma_0 (u'_m) \rightharpoonup \gamma_0 (u') \text{ in } L^2 \left(0,T;L^2(\Gamma_1) \right).
\end{equation}

Besides the convergences in \eqref{conv1}-\eqref{fraca7}, we must also consider the boundedness of the sequence $\left( \capt u'_m \right)_{m \in \mathbb{N}}$ in $L^\infty \left(0,T;L^{2}(\Omega) \right)$. It's worth noting that, due to the non-standard behavior of the Caputo fractional derivative, this argument is unconventional and requires additional justification. Since $\left( \capt u'_m \right)_{m \in \mathbb{N}}$ is bounded in $L^\infty \left(0,T;L^{2}(\Omega) \right)$, we can extract a subsequence, denoted the same way, and find a function $v$ such that
\begin{equation}\label{caputoconv}
\capt u'_m \overset{\star}{\rightharpoonup} v \,\, \mbox{in}\,\, L^\infty \left(0,T;L^{2}(\Omega) \right).
\end{equation}

The main task now is to prove that we can compute the Caputo fractional derivative of order $\alpha$ of $u'$ and that $v =  \capt u'$. 
%
%
%
At first, observe that from \eqref{caputoconv} and the definition of Caputo fractional derivative, for all $w \in H^1_{\Gamma_0}(\Omega)$ and $\theta \in \mathcal{D}(0,T)$, we have
\begin{equation*}
\int_0^T \frac{d}{dt} J^{1-\alpha}_t \left( u'_m(t) - u'_m(0), w \right) \theta(t) dt \to \int_0^T \left( v(t) , w \right) \theta(t) dt\,.
\end{equation*}
On the other hand, the continuity of the RL fractional integral, along with \eqref{help5} and \eqref{conv2}, enables us to deduce that
\begin{multline*}
\int_0^T \frac{d}{dt} J^{1-\alpha}_t \left( u'_m(t) - u'_m(0), w \right) \theta(t) dt \\= \int_0^T J^{1-\alpha}_t \left( u'_m(t) - u_{1m}, w \right) \theta'(t)\, dt
\to \int_0^T J^{1-\alpha}_t \left( u'(t) - u_{1}, w \right) \theta'(t)\, dt.
\end{multline*}
Hence, from the uniqueness of limit
\begin{equation*}
\int_0^T \frac{d}{dt} J^{1-\alpha}_t \left( u'(t) - u_1, w \right) \theta(t)\, dt = \int_0^T \left( v(t), w \right) \theta(t)\,dt\,,
\end{equation*}
\noindent for all $w \in H^1_{\Gamma_0}(\Omega)$ and $\theta \in \mathcal{D}(0,T).$ This identity shows that  $D^\alpha_t(u'-u_1)=v$ in $L^\infty(0,T;L^2(\Omega))$. Finally, applying Theorem \ref{teoderivcont} we get that $u^{\prime} \in C([0,T]; L^{2}(\Omega))$ and $u'(0)=u_1$ in $ L^{2}(\Omega)$. From this we may conclude that $\capt u'(t)=v(t)$, for a.e. $t\in(0,T)$, and therefore we have
\begin{equation}\label{caputoconv1}
\capt u'_m \overset{\star}{\rightharpoonup} \capt u' \,\, \mbox{in}\,\, L^\infty \left(0,T;L^{2}(\Omega) \right),
\end{equation}
as we wanted.

%
%

Now, we can take the limit in the approximate problem \eqref{eqap2}-\eqref{eqap3}. The approximate equation \eqref{eqap2} and the convergences \eqref{conv1}, \eqref{conv3}, and \eqref{caputoconv1} yield
\begin{equation} \label{acopla1}
\left( \capt u'(t), \varphi \right) + \left( \left( u(t), \varphi \right) \right)  - \left( \delta'(t), \gamma_0 (\varphi) \right)_{\Gamma_1} =0,
\end{equation}
for every $\varphi \in H^1_{\Gamma_0}(\Omega)$ and a.e. $t \in (0,T)$. In particular,
\begin{equation*}
\left\langle - \Delta u(t), \varphi \right\rangle_{\mathcal{D}'(\Omega) \times \mathcal{D}(\Omega)} = \left( \left( u (t) , \varphi \right) \right) = -\left( \capt u'(t), \varphi \right),\, \mbox{for all } \varphi \in \mathcal{D}(\Omega)\,,
\end{equation*}
that implies
\begin{equation} \label{igualdadeqsu}
	- \Delta u(t) = -\capt u'(t) \,\,\, \mbox{in} \, L^{2}(\Omega) \,,\, \, \mbox{for a.e.}\, t \in (0,T),
\end{equation}
since $\capt u'(t) \in L^{2}(\Omega)\,\, \mbox{for a.e.}\, t \in (0,T).$ This proves \eqref{id12} and shows that $u(t)\in \mathcal{H}_{\Delta}(\Omega)$ for a.e. $t \in (0,T).$

Taking into account \eqref{eqap3} and the convergences \eqref{conv3}-\eqref{fraca7} we  conclude
\begin{equation*}
\left(f \delta''(t) + g\delta'(t) +h\delta(t) +\gamma_0 (u'(t)), \psi \right)_{\Gamma_1} =0, \quad \forall \psi \in L^2(\Gamma_1), \text{ a.e. in } (0,T).
\end{equation*}
Consequently, by Du Bois-Raymond Lemma,
\begin{equation} \label{igualdadeqsdelta}
f\delta''(t) + g\delta'(t) +h\delta(t) +\gamma_0 (u'(t)) =0 \text{ in } L^{2}(\Gamma_{1}), \text{ for a.e. } t \in (0,T),
\end{equation}
and \eqref{id13} is proved.

In order to prove \eqref{id14} we multiply (\ref{igualdadeqsu}) by $\varphi \in H^1_{\Gamma_0}(\Omega)$ and integrate over $\Omega$, that leads to
\begin{equation*}
\left( \capt u'(t), \varphi \right) - \left(\Delta u(t), \varphi \right) =0, \quad \text{for a.e. } t \in (0,T).
\end{equation*}
Therefore, the generalized Green identity gives
\begin{equation} \label{acopla2}
\left( \capt u'(t), \varphi \right) + \left( \left( u(t), \varphi \right) \right) - \left\langle \gamma_1(u(t)) , \gamma_0(\varphi) \right\rangle_{H^{-\frac{1}{2}}(\Gamma) \times H^{\frac{1}{2}}(\Gamma)} =0,
\end{equation}
for almost every $t \in (0,T)$ and for every $\varphi \in H^1_{\Gamma_0}(\Omega)$. We compare (\ref{acopla1}) and (\ref{acopla2}) to conclude \eqref{id14}.

To verify that the functions $u$ and $\delta$ satisfy the initial conditions \eqref{id15} and \eqref{id16} we first observe that we have already proven $u^{\prime}(0)=u_{1}$ in $H^{1}_{\Gamma_{0}}(\Omega)$. The regularity of $u$, given by \eqref{id10}, implies $u \in C([0,T]; H^{1}_{\Gamma_{0}}(\Omega))$ and the convergences \eqref {help4}, \eqref{conv1} and \eqref{conv2} yields $u(0)=u_{0}.$ Similarly we have $\delta \in C([0,T]; L^{2}(\Gamma_{1}))$ and from convergences \eqref {help7}, \eqref{conv3} and \eqref{conv4} we obtain $\delta^{\prime}(0) = \gamma_{1}(u_{0})\vert_{\Gamma_{1}}$ in $L^{2}(\Omega)$.
This ends Step 3. \vspace*{0.3cm}

\noindent \textbf{Step 4: Uniqueness.}
Let  $(u,\delta)$ and $(v,\varrho)$ be two pairs of functions satisfying \eqref{id10}-\eqref{id16}. Putting $w=u-v$ and $\zeta = \delta - \varrho$, we observe that the pair $(w,\zeta)$ has the regularity described in \eqref{id10}, \eqref{id11} and satisfies
\begin{eqnarray}
&& \capt w_t(t) - \Delta w(t) = 0\, \,\,\mbox{in} \,\, L^{2}(\Omega)\,, \,\, \mbox{for a.e.}\,\, t \in (0,T); \label{unicidade1} \\
&& f \zeta_{tt}(t) + g \zeta_t(t) + h \zeta(t) = - \gamma_{0}(w_t(t)) \textrm{ in } L^{2}(\Gamma_{1}), \textrm{ for a.e. } t \in (0,T);    \label{unicidade2} \\
&&\int_{\Gamma_1} \zeta_t(t) \gamma_0(\varphi) d \Gamma_1 = \left\langle \gamma_1(w(t)) ,\gamma_0(\varphi) \right\rangle_{H^{-\frac{1}{2}}(\Gamma) \times H^{\frac{1}{2}}(\Gamma) }, \nonumber \\
&& \textrm{ for all } \varphi \in H^1_{\Gamma_0}(\Omega)\,\, \textrm{ and a.e. } \in (0,T); \label{unicidade3}\\
&& w(0) = w_t(0)= 0 \textrm{ in } L^2(\Omega) \textrm{ and }  \zeta(0) = \zeta_t(0) = 0 \textrm{ in } L^2(\Gamma_1). \label{unicidade4}
\end{eqnarray}

Taking the $L^{2}(\Omega)$ inner product, in both sides of \eqref{unicidade1}, by $2 w'(t)$; using generalized Green$^{\prime}$s formula and considering \eqref{unicidade3} we obtain
\begin{equation*}
2 \left( \capt w'(t), w'(t) \right) + 2\left( \left( w(t), w'(t) \right) \right) - 2 \left( \zeta'(t) , \gamma_{0}(w'(t)) \right)_{\Gamma_1} = 0\,.
\end{equation*}
This equality, after taking the $L^{2}(\Gamma_{1})$ inner product, in both sides of \eqref{unicidade2}, by $2\zeta'(t)$, implies that
\begin{multline}
 2\left( \capt w'(t), w'(t) \right) + 2\left( \left( w(t), w'(t) \right) \right) +2\left(f \zeta''(t) , \zeta'(t) \right)_{\Gamma_1} \\
+ 2\left(g \zeta'(t) , \zeta'(t) \right)_{\Gamma_1} + 2\left(h \zeta(t),\zeta'(t) \right)_{\Gamma_1} = 0. \label{unicidade5}
\end{multline}
\noindent From now on, we proceed as in Estimate 1 and obtain
\begin{equation*}
J^{1-\alpha}_t \left\vert  w'(t) \right\vert^2 + \left\Vert w(t) \right\Vert^2+\left\vert \zeta'(t) \right\vert_{\Gamma_1}^2+ \left\vert \zeta(t)\right\vert_{\Gamma_1}^2  \leq 0\,, \mbox{for a.e.} \,\, t \in (0,T)\,,
\end{equation*}
where to arrive at the above inequality, we have considered the null initial conditions \eqref{unicidade4}. Thus we have $w(t) = 0$ in $H^{1}_{\Gamma_{0}}(\Omega)$ and $\zeta(t) =0$ in $L^{2}(\Gamma_{1})$ for a.e. $t \in (0,T)$, which show that the pairs of functions $(u, \delta)$ and $(v, \varrho)$ are equal, concluding the proof of the uniqueness. \vspace*{0.3cm}

\noindent \textbf{Step 5: Continuous dependence.} We now say that the unique pair of function $(u,\delta)$, constructed in the previous steps, constitutes a solution to the problem \eqref{1prob}-\eqref{6prob}. With this step, we conclude the well-posedness of the problem \eqref{1prob}-\eqref{6prob} and demonstrate  that the solution $(u,\delta)$ depends continuously on the parameters $f,\,\,g,\,\,h,\,\,u_0,\,\,u_1$ and $\delta_0.$

Initially we observe that from the weakly lower semicontinuity of the norms and estimates \eqref{aprioriI}, \eqref{aprioriII} and \eqref{aprioriIIa} we get
\begin{eqnarray}\label{semicontinuidade}
&&\left\Vert \capt u' \right\Vert^2_{L^\infty(0,T; L^2(\Omega))}
+ \left\Vert u' \right\Vert^2_{L^\infty(0,T;L^2(\Omega))} + \left\Vert u' \right\Vert^2_{L^2 \left(0,T;H^1_{\Gamma_0}(\Omega) \right)} + \left\Vert u \right\Vert^2_{L^\infty \left(0,T;H^1_{\Gamma_0}(\Omega) \right)} \nonumber\\
&&+ \left\Vert \delta'' \right\Vert^2_{L^2(0,T;L^2(\Gamma_1))}
+  \left\Vert \delta' \right\Vert^2_{L^2(0,T;L^2(\Gamma_1))}   + \left\Vert \delta \right\Vert^2_{L^\infty(0,T;L^2(\Gamma_1))} \nonumber\\
&&\leq K \left[ \left\Vert u_{0} \right\Vert^2_{H^1_{\Gamma_0}(\Omega) \cap H^2(\Omega) } + \left\Vert u_{1} \right\Vert^2 + \left\vert  \delta_{0} \right\vert^2_{\Gamma_1} \right].
\end{eqnarray}

On the other hand, let $(u,\delta)$ and $(\tilde{u},\tilde{\delta})$ be the solutions of the problem \eqref{1prob}-\eqref{6prob} associated to sets of parameters $\{f,g,h,u_0,u_1,\delta_0\}$ and  $\{\tilde{f},\tilde{g},\tilde{h}, \tilde{u}_0,\tilde{u}_1,\tilde{\delta}_0\}$, respectively. We set $v=u-\tilde{u}$ and $\sigma=\delta-\tilde{\delta}$. Therefore, by taking inner products in equations \eqref{igualdadeqsu} and \eqref{igualdadeqsdelta}, as in the previous step, and carefully manipulating terms, we obtain
\begin{multline*}
2 \left( \capt v'(t),v'(t) \right) + 2 \left( \left( v(t),v'(t) \right) \right) + 2 (f \sigma''(t), \sigma'(t) )_{\Gamma_1} + 2 ([f-\tilde{f}] \tilde{\delta}''(t), \sigma'(t) )_{\Gamma_1} \\
 + 2 (g \sigma'(t), \sigma'(t) )_{\Gamma_1} + 2 ([g-\tilde{g}] \tilde{\delta}'(t), \sigma'(t) )_{\Gamma_1} 
 + 2 (h \sigma(t), \sigma'(t) )_{\Gamma_1} + 2 ([h-\tilde{h}] \tilde{\delta}(t), \sigma'(t) )_{\Gamma_1} = 0.
\end{multline*}

Proceeding as in the first a priori estimate, we obtain
\begin{multline*}
\capt \vert v'(t) \vert^2 + \frac{d}{dt} \left[ \Vert v(t) \Vert^2 + \vert f^{\frac{1}{2}} \sigma'(t) \vert^2_{\Gamma_1}  +\vert h^{\frac{1}{2}} \sigma(t) \vert^2_{\Gamma_1} \right] \\
\leq 2 \big [ \Vert f - \tilde{f} \Vert_{C(\overline{\Gamma}_1)} + \Vert g - \tilde{g} \Vert_{C(\overline{\Gamma}_1)} + \Vert h - \tilde{h} \Vert_{C(\overline{\Gamma}_1)} \big ] \big [ \vert \tilde{\delta}''(t) \vert_{\Gamma_1} + \vert \tilde{\delta}'(t) \vert_{\Gamma_1} +  \vert \tilde{\delta}(t) \vert_{\Gamma_1}  \big ]\vert \sigma'(t) \vert_{\Gamma_1} \\
\leq C \big[ \Vert f - \tilde{f} \Vert_{C(\overline{\Gamma}_1)} + \Vert g - \tilde{g} \Vert_{C(\overline{\Gamma}_1)} + \Vert h - \tilde{h} \Vert_{C(\overline{\Gamma}_1)} \big],
\end{multline*}
where in the last inequality we used \eqref{semicontinuidade}.

Integrating over $(0,t)$ and applying \eqref{id3}, we obtain
\begin{multline*}
J^{1-\alpha}_t \vert v'(t) \vert^2 + \Vert v(t) \Vert^2 + \vert \sigma'(t) \vert^2_{\Gamma_1} + \vert \sigma(t) \vert^2_{\Gamma_1} \\
\leq C \left( \vert v'(0) \vert^2 + \Vert v(0) \Vert^2 + \vert \sigma'(0) \vert^2_{\Gamma_1} + \vert \sigma(0) \vert^2_{\Gamma_1} \right. \\
\left. + \Vert f - \tilde{f} \Vert_{C(\overline{\Gamma}_1)} + \Vert g - \tilde{g} \Vert_{C(\overline{\Gamma}_1)} + \Vert h - \tilde{h} \Vert_{C(\overline{\Gamma}_1)} \right).
\end{multline*}

Finally, by \eqref{id7}, taking $t=T$ in the above inequality we conclude
\begin{multline*}
\Vert \capt (u-\tilde{u}) \Vert_{L^\infty(0,T;L^2(\Omega))} + \Vert u-\tilde{u} \Vert^2_{C([0,T];H^1_{\Gamma_0}(\Omega))} + \Vert \delta-\tilde{\delta} \Vert^2_{C^1([0,T];L^2(\Gamma_1))} \\
\leq C \big( \Vert u_0-\tilde{u}_0 \Vert^2_{ H^1_{\Gamma_0}(\Omega)} \cap H^2(\Omega) + \Vert u_1-\tilde{u}_1 \Vert^2 + \vert \delta_0-\tilde{\delta}_0 \vert^2_{\Gamma_1} \\
+ \Vert f - \tilde{f} \Vert_{C(\overline{\Gamma}_1)} + \Vert g - \tilde{g} \Vert_{C(\overline{\Gamma}_1)} + \Vert h - \tilde{h} \Vert_{C(\overline{\Gamma}_1)} \big).
\end{multline*}

\end{proof}

\section{Closing Remarks} \label{closingremarks}

We have not yet discussed in this article the equation used to describe the time-fractional wave equation in \eqref{1prob}. It is important to note that our approach differs from the conventional one found in established literature (e.g., \cite{Ma1} and related references), which is represented by the equation
\begin{equation}
\label{1'}
\tag{1'} \capto^{1+\alpha} u(x,t) - \Delta u(x,t) = 0, \quad \textrm{in} \ \Omega \times (0,T),
\end{equation}
where $0<\alpha<1$.

These two formulations differ fundamentally because, as clarified in item $(v)$ of Proposition \ref{prop1}, the equality $\capto^{1+\alpha} f(t) = \capto^{\alpha} f'(t)$, for a.e. $t\in[0,T]$, is valid only when the function $f$ possesses adequate regularity. In our case, for example, we cannot establish that $J_t^{1-\alpha}u(t) \in W^{2,1}(0,T;L^2(\Omega))$, which prevents us from applying \eqref{id6}.

Nevertheless, the constructions and convergences we have achieved during our study are sufficient to demonstrate that our solution $u$ indeed satisfies \eqref{1'}.

Let us begin by considering the approximated solutions $u_m$ of our main theorem, $\varphi \in H^1_{\Gamma_0}(\Omega) $ and $\theta \in \mathcal{D}(0,T)$. Then we have
\begin{multline*}
\int_0^T \left( \capto^{1+\alpha} u_m(t), \varphi \right) \theta(t) dt
= \int_0^T \frac{d^2}{dt^2} J^{1-\alpha}_t \left( u_m(t) - u_m(0)-tu'_m(0) , \varphi \right) \theta(t) dt \\
= \int_0^T J^{1-\alpha}_t \left( u_m(t) - u_m(0)-tu'_m(0) , \varphi \right) \theta''(t) dt.
\end{multline*}

Additionally, since $u_m$ is sufficiently regular, by \eqref{id6} we have
\begin{equation*}
\int_0^T \left( \capto^{1+\alpha} u_m(t), \varphi \right) \theta(t)dt = \int_0^T \left( \capt u'_m(t), \varphi \right) \theta(t)dt.
\end{equation*}

In this context, we observe from the convergences on Step 3 and the continuity of the RL fractional integral of order $\alpha$ that
\begin{equation*}
\int_0^T \left( \capt u'_m(t), \varphi \right) \theta(t)dt \to \int_0^T \left( \capt u'(t), \varphi \right) \theta(t)dt
\end{equation*}
and
\begin{multline*}
\int_0^T J^{1-\alpha}_t \left( u_m(t) - u_m(0)-tu'_m(0) , \varphi \right) \theta''(t) dt \\
\to \int_0^T J^{1-\alpha}_t \left( u(t) - u(0)-tu'(0) , \varphi \right) \theta''(t) dt.
\end{multline*}

Therefore, the uniqueness of the limit implies that
\begin{equation*}
\capto^{\alpha +1} u(x,t)= \capt u'(x,t),
\end{equation*}
for almost every $(x,t)\in\Omega\times(0,T)$. As a result, we are further investigating, after establishing the appropriate solution concepts, the existence and uniqueness of a strong solution to problem \eqref{2prob}-\eqref{6prob} while considering \eqref{1'} in place of \eqref{1prob}.


\begin{thebibliography}{99}

\bibitem{AlCaCl1}  A.A. Alcântara, B.A. Carmo, H.R. Clark,  et al., Nonlinear wave equation with Dirichlet and acoustic boundary conditions: theoretical analysis and numerical simulation. Comp. Appl. Math. 41 (141) (2022), https://doi.org/10.1007/s40314-022-01822-5.

\bibitem{AlAhKi} A. Alsaedi, B. Ahmad, M. Kirane, Maximum principle for certain generalized time and space fractional diffusion equations, Quart. Appl. Math. 73 (2015) 163-175, https://doi.org/10.1090/S0033-569X-2015-01386-2.

\bibitem{ArBaHi1} W. Arendt, C.J.K. Batty, M. Hieber, F. Neubrander, Vector-valued Laplace Transforms and Cauchy Problems, Birkhäuser, Basel, 2011.

\bibitem{Baz1} E.G. Bazhlekova, Fractional evolution equations in Banach spaces. Ph.D. Thesis, Mathematics and Computer Science, Technische Universiteit Eindhoven (2001),
https://doi.org/10.6100/IR549476

\bibitem{Be1} J.T. Beale, Spectral properties of an acousticboundary condition, Indiana Univ. Math. J. 25 (1976) 895-917, https://doi.org/10.1512/iumj.1976.25.25071.

\bibitem{BeRo1} J.T. Beale, S.I. Rosencrans, Acoustic boundary conditions, Bull. Amer. Math. Soc. 80 (6) (1974) 1276-1278, https://doi.org/10.1090/S0002-9904-1974-13714-6.

\bibitem{BrSiClFr1} P. Braz e Silva, H.R. Clark, C.L. Frota, On a nonlinear coupled system of thermoelastic type with acoustic boundary conditions, Comput. Appl. Math. 36 (2017) 397-414, https://doi.org/10.1007/s40314-015-0236-1.


\bibitem{Car1} P.M. Carvalho-Neto, Fractional Differential Equations: a novel study of local and
global solutions in Banach spaces, Ph.D. Thesis, Universidade de São Paulo, São Carlos (2013).

\bibitem{CarFe0} P.M. Carvalho-Neto, R. Fehlberg-Júnior, On the fractional version of Leibniz rule, Math. Nachr. 293 (2020) 670-700, https://doi.org/10.1002/mana.201900097.

\bibitem{CarFe1} P.M de Carvalho-Neto, R. Fehlberg-Júnior, The Riemann-Liouville fractional integral in Bochner-Lebesgue spaces I. Commun. Pure Appl. Anal. 21 (11) (2022) 3667-3700, https://doi.org/10.3934/cpaa.2022118.


\bibitem{CaCa1} A. Cartea, D. del Castillo-Negrete,
Fractional diffusion models of option prices in markets with jumps, Phys. A: Stat. Mech. Appl. 374 (2) (2007) 749-763, https://doi.org/10.1016/j.physa.2006.08.071.

\bibitem{CaCaLy1} D. del Castillo-Negrete, B. Carreras, V. Lynch, Nondiffusive transport in plasma turbulence: a fractional diffusion approach, Phys. Rev. Lett. 94 (6) (2005) 065003, https://doi.org/10.1103/PhysRevLett.94.065003.

\bibitem{CaDoCaFrVi1} M.M. Cavalcanti, V.N. Domingos Cavalcanti, C.L. Frota, A. Vicente, Stability for semilinear wave equation in an inhomogeneous medium with frictional localized damping and acoustic boundary conditions, SIAM J. on Control Optim. 58 (4) (2020) 2411-2445, https://doi.org/10.1137/19M1267556.

\bibitem{Da-GeJa0} V. Daftardar-Gejji, A. Babakhani, Analysis of a system of fractional differential equations, J. Math. Anal. Appl. 293 (2004) 511?522.

\bibitem{Da-GeJa1} V. Daftardar-Gejji, H. Jafari, Boundary value problems for fractional diffusion-wave equation, Aust. J. Math. Anal. Appl. 3 (1) (2006) 1-8.

\bibitem{FeBrSlBaWe1} T.J. Feder, I. Brust-Mascher, J.P. Slattery, B. Baird, W.W. Webb, Constrained diffusion or immobile fraction on cell surfaces: a new interpretation, Biophys. J. 70 (1996) 2767-2773, https://doi.org/10.1016/S0006-3495(96)79846-6.

\bibitem{Fr1} S. Frigeri, Attractors for semilinear damped wave equations with an acoustic  boundary condition, J. Evol. Equ. 10 (2010) 29-58, https://doi.org/10.1007/s00028-009-0039-1.

\bibitem{FrCoLa1} C.L. Frota, A.T. Cousin, N.A. Larkin, On a system of Klein-Gordon type equations with acousticboundary conditions, J. Math. Anal. Appl. 293 (1) (2004) 293-309, https://doi.org/10.1016/j.jmaa.2004.01.007.

\bibitem{FrGo1} C.L. Frota, J.A. Goldstein, Some nonlinear wave equations with acoustic boundary conditions, J. Differ. Equ. 164 (2000) 92-109, https://doi.org/10.1006/jdeq.1999.3743.

\bibitem{FrMeVi1} C.L. Frota, L.A. Medeiros, A. Vicente, Wave equation in domains with non-locally reacting boundary. Differ. Integral Equ. 24 (11-12) (2011) 1001-1020, http://doi.org/10.57262/die/1356012872.


\bibitem{GoRa1} L.R. González-Ramírez, Fractional-order traveling wave approximations for a fractional-order neural field model. Front. Comput. Neurosci. 16 (2022) 788924, https://doi.org/10.3389/fncom.2022.788924.


\bibitem{GrSaHo1} P.J. Graber, B. Said-Houari, On the wave equation with semilinear porous acoustic boundary conditions, J. Diff. Equ. 252 (2012) 4898-4941, https://doi.org/10.1016/j.jde.2012.01.042.

\bibitem{HaLi} G.H. Hardy, J.E. Littlewood, Some properties of fractional integrals. I., Math. Z. 27 (1928), 565-606, https://doi.org/10.1007/BF01171116.


\bibitem{HuYa1} X. Huang, M. Yamamoto, Well-posedness of initial-boundary value problem for time-fractional diffusion-wave equation with time-dependent coefficients, arXiv preprint, arXiv:2203.10448, 2022,
https://doi.org/10.48550/arXiv.2203.10448.

\bibitem{KiSrTr1} A.A. Kilbas, H.M. Srivastava, J.J. Trujillo, Theory and Applications of Fractional Differential Equations, Elsevier, Amsterdam, 2006.

\bibitem{KoTa1} Y. Kobayashi, N. Tanaka, An application of semigroups of locally Lipschitz operators to Carrier equations with acoustic boundary conditions, J. Math. Anal. Appl. 338 (2008) 852-872, https://doi.org/10.1016/j.jmaa.2007.05.062.

\bibitem{LiClFrMe1} J. Limaco, H.R. Clark, C.L. Frota, L.A. Medeiros, On an evolution equation with acoustic boundary conditions. Math. Methods Appl. Sci. 34 (2011) 2047-2059,  https://doi.org/10.1002/mma.1503.

\bibitem{Lions} J.L. Lions, Quelques méthodes de résolution des problèmes aux limites non linéaires, Dunod, Paris, 1969.

\bibitem{LiMa1} J.L. Lions, E. Magenes, Non-homogeneous Boundary Value Problems and Applications, Springer-Verlag, New York, 1972.

\bibitem{Ma1} F. Mainardi, Fractional Calculus and Waves in Linear Viscoelasticity, Imperial College Press, London, 2010.

\bibitem{Ma2} F. Mainardi, Fractional relaxation-oscillation and fractional diffusion-wave phenomena, Chaos, Solit. Fractals 7 (9) (1996) 1461-1477, https://doi.org/10.1016/0960-0779(95)00125-5.

\bibitem{MeKl1} R. Metzler, J. Klafter, The random walk's guide to anomalous diffusion: a fractional dynamics approach. Phys. Reports 339 (1) (2000) 1-77, https://doi.org/10.1016/S0370-1573(00)00070-3.

\bibitem{MoIn1} P.M. Morse, K.U. Ingard, Theoretical Acoustics, McGraw-Hill, New York, 1968.

\bibitem{MuPa1} A. Mura, G. Pagnini,
Characterization and simulations of a class of stochastic processes to model anomalous diffusion
J. Phys. A: Math. Theor. 41 (2008) 285003, http://doi.org/10.1088/1751-8113/41/28/285003.

\bibitem{Po1} I. Podlubny, Fractional Differential Equations, Academic Press, San Diego, 1998.

\bibitem{SaKiMa1} S.G. Samko, A.A. Kilbas, O.I. Marichev, Fractional Integrals and Derivatives: Theory and Applications, Gordon and Breach Science Publishers, New York, 1993.

\bibitem{Ze1} E. Zeidler, Nonlinear Functional Analysis and its Aplications II/A: Linear Monotone Operators, Springer, New York, 1990.

\end{thebibliography}
\end{document}